\def\serieslogo@{} 
\def\@setcopyright{} 
\title[Dimensions of triangulated categories]{Dimensions of triangulated categories via \\ Koszul objects}
\dedicatory{Dedicated to Karin Erdmann on the occasion of her 60th birthday.}
\thanks{Bergh and Oppermann were supported by NFR Storforsk grant no. 167130, and Iyengar was partly supported by NSF grant, DMS 0602498. Iyengar's work was done at the University of Paderborn, on a visit supported by a For\-schungs\-preis awarded by the Humboldt-Stiftung.}
\author[Bergh]{Petter Andreas Bergh}
\address{Petter Andreas Bergh\\ Institutt for matematiske fag \\ NTNU \\ N-7491 Trondheim \\ Norway}
\email{bergh@math.ntnu.no} 
\author[Iyengar]{Srikanth B. Iyengar}
\address{Srikanth B. Iyengar\\ Department of Mathematics\\
University of Nebraska\\ Lincoln NE 68588\\ U.S.A.}
\email{iyengar@math.unl.edu}
\author[Krause]{\\ Henning Krause}
\address{Henning Krause\\ Institut f\"ur Mathematik\\
Universit\"at Paderborn\\ 33095 Paderborn\\ Germany.}
\email{hkrause@math.upb.de}
\author[Oppermann]{Steffen Oppermann}
\address{Steffen Oppermann\\ Institutt for matematiske fag \\ NTNU \\ N-7491 Trondheim \\ Norway}
\email{opperman@math.ntnu.no}
\subjclass[2000]{16E30, 16E40, 18E30}
\keywords{triangulated category, Koszul object, representation dimension}
\theoremstyle{plain}
\newtheorem{theorem}{Theorem}[section]
\newtheorem{proposition}[theorem]{Proposition}
\newtheorem{lemma}[theorem]{Lemma}
\newtheorem{corollary}[theorem]{Corollary}
\theoremstyle{definition}
\newtheorem{example}[theorem]{Example}
\theoremstyle{remark}
\newtheorem{remark}[theorem]{Remark}
\numberwithin{equation}{section}
\newcommand{\col}{\colon}
\newcommand{\Thick}{\operatorname{thick}\nolimits}
\renewcommand{\dim}{\operatorname{dim}\nolimits}
\newcommand{\End}{\operatorname{End}\nolimits}
\newcommand{\Ext}{\operatorname{Ext}\nolimits}
\newcommand{\Hom}{\operatorname{Hom}\nolimits}
\newcommand{\id}{\operatorname{id}\nolimits}
\newcommand{\length}{\operatorname{length}\nolimits}
\newcommand{\Ker}{\operatorname{Ker}\nolimits}
\newcommand{\ann}{\operatorname{ann}\nolimits}
\newcommand{\spec}{\operatorname{Spec}\nolimits}
\newcommand{\supp}{\operatorname{supp}\nolimits}
\renewcommand{\mod}{\operatorname{\mathsf{mod}}\nolimits}
\newcommand{\gldim}{\operatorname{gl.dim}\nolimits}
\newcommand{\repdim}{\operatorname{rep.dim}\nolimits}
\newcommand{\HH}{\operatorname{HH}\nolimits}
\newcommand{\per}{\mathrm{per}}
\newcommand{\even}{\mathrm{ev}}
\newcommand{\odd}{\mathrm{odd}}
\newcommand{\xlto}[1]{\stackrel{#1}\longrightarrow}
\newcommand{\lto}{\longrightarrow}
\newcommand{\xra}{\xrightarrow}
\newcommand{\un}{\un}
\def\Si{{\Sigma}}
\def\mcU{\mathcal{U}}
\def\mcV{\mathcal{V}}
\def\bbn{\mathbb N}
\def\bbz{\mathbb Z}
\def\sfD{\mathsf D}
\def\sfT{\mathsf T}
\def\bfD{\mathbf D}
\newcommand{\bsr}{\boldsymbol{r}}
\newcommand{\bsx}{\boldsymbol{x}}
\newcommand{\fm}{\mathfrak{m}}
\newcommand{\fp}{\mathfrak{p}}
\newcommand{\fq}{\mathfrak{q}}
\newcommand{\fr}{\mathfrak{r}}
\newcommand{\ges}{{\scriptscriptstyle\geqslant}}
\newcommand{\abel}{\mathsf{Ab}}
\newcommand{\ass}{\operatorname{Ass}\nolimits}
\newcommand{\cent}{{Z}}
\newcommand{\codim}{\operatorname{codim}\nolimits}
\newcommand{\edim}{\operatorname{edim}\nolimits}
\newcommand{\cx}{\operatorname{cx}}
\newcommand{\kos}[2]{{#1/\!\!/#2}}
\newcommand{\loewy}{\operatorname{ll}}
\newcommand{\proj}{\operatorname{Proj}}
\newcommand{\msup}[1][R]{\operatorname{Supp}^{\scriptscriptstyle +}_{#1}}
\newcommand{\Supp}[1][R]{\operatorname{Supp}_{#1}}
\newcommand{\rank}{\operatorname{rank}\nolimits}
\newcommand{\thickn}[3][\sfT]{{\operatorname{thick}_{#1}^{#2}(#3)}}
\newcommand{\level}[3][\sfT]{{\operatorname{level}_{#1}^{#2}(#3)}}
\newcommand{\std}{\mathsf{D}^{b}_{\mathsf{st}}}
\newcommand{\noeth}{\operatorname{noeth}\nolimits}
\newcommand{\flnoeth}{\operatorname{noeth}^{\mathrm{fl}}\nolimits}
\begin{document}

\begin{abstract}
Lower bounds for the dimension of a triangulated category are
provided.  These bounds are applied to stable derived categories of
Artin algebras and of commutative complete intersection local
rings. As a consequence, one obtains bounds for the representation
dimensions of certain Artin algebras.
\end{abstract} 

\maketitle

\section{Introduction}
A notion of dimension for a triangulated category was introduced by Rouquier in \cite{Rq:dim}. Roughly speaking, it corresponds to the minimum number of steps needed to generate the category from one of its objects. Consideration of this invariant has been critical to some recent developments in algebra and geometry: Using the dimension of the stable category of an exterior algebra on a $d$-dimensional vector space, Rouquier \cite{Rq:ext} proved that the representation dimension of the exterior algebra is $d+1$, thereby obtaining the first example of an algebra with representation dimension more than three.

On the other hand, Bondal and Van den Bergh~\cite{BV} proved that any cohomological finite functor on the bounded derived category of coherent sheaves on a smooth algebraic variety over a field is representable, by establishing that that triangulated category has finite dimension.

In this paper we establish lower bounds for the dimension of a triangulated category and discuss some applications. We make systematic use of the graded-commutative structure of the triangulated category -- in particular, Koszul objects -- arising from its graded center; see Section~\ref{sec:koszulobjects}. 

A triangulated category $\sfT$ is by definition an additive $\bbz$-category equipped with a class of exact triangles satisfying various axioms \cite{V}. Here, \emph{$\bbz$-category} simply means that there is a fixed equivalence $\Si\colon\sfT\to\sfT$.

Given any additive $\bbz$-category $\sfT=(\sfT,\Si)$, we introduce a natural finiteness condition for objects of $\sfT$ as follows: Let $R=\bigoplus_{i\ges 0}R^{i}$ be a graded-commutative ring that acts on $\sfT$ via a homomorphism of graded rings $R\to \cent^*(\sfT)$ to the graded center of $\sfT$. Thus for each pair of
objects $X,Y$ in $\sfT$, the graded abelian group
\[
\Hom_\sfT^*(X,Y)=\bigoplus_{i\in\bbz}\Hom_\sfT(X,\Si^iY)
\]
is a graded $R$-module.

Now fix an object $X$ in $\sfT$ and suppose that for each $Y\in\sfT$ there exists an integer $n$ such that the following properties hold:
\begin{enumerate}[{\quad\rm(1)}]
\item the graded $R$-module $\bigoplus_{i\ges n}\Hom_\sfT(X,\Si^{i}Y)$ is noetherian, and
\item the $R^{0}$-module $\Hom_{\sfT}(X,\Si^{i}Y)$ is of finite length for $i\ge n$.
\end{enumerate}

In this case, $\Hom_\sfT^{*}(X,Y)$ has finite (Krull) dimension over $R^{\even}$, the subring of $R$ consisting of elements of even degree, which is a commutative ring. If $X$ has this finiteness property also with respect to another ring $S$, then the dimension of $\Hom^*_{\sfT}(X,Y)$ over $S$ coincides with that over $R$; see Lemma~\ref{lem:independence}. For this reason, we denote this number by $\dim\Hom^*_{\sfT}(X,Y)$. 

The main result in this work is as follows.

\begin{theorem}
\label{ithm:intro}
Let $\sfT$ be a triangulated category and $X$ an object with properties as above. One then has an inequality
\[
\dim \sfT \geq \dim\End^{*}_{\sfT}(X)-1\,.
\]
\end{theorem}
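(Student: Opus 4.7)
Assume $d := \dim\sfT$ is finite and set $s := \dim\End^{*}_{\sfT}(X)$; the goal is to prove $d+1\ge s$. Fix a generator $G\in\sfT$ with $\sfT=\Thick^{d+1}(G)$ in Rouquier's sense. The plan is to apply the triangulated ghost lemma (Kelly/Christensen/Rouquier): if $Y\in\Thick^{n}(G)$, then any composition of $n$ morphisms $f$ satisfying $\Hom^{*}_{\sfT}(G,f)=0$ (so-called $G$-ghosts) starting at $Y$ vanishes. Contrapositively, an $n$-fold composition of $G$-ghosts out of $X\in\sfT$ with nonzero composition forces $n\le d$. It therefore suffices to exhibit $s-1$ composable $G$-ghosts out of $X$ whose total composition is nonzero.

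The ghost sources are central-action maps combined with iterated Koszul constructions. For homogeneous $r\in R$ of positive degree and any $Z\in\sfT$, the canonical map $r_Z\colon Z\to\Si^{|r|}Z$ induces multiplication by $r$ on $\Hom^{*}_{\sfT}(G,Z)$, so $r_Z$ is a $G$-ghost precisely when $r\in\ann_{R}\Hom^{*}_{\sfT}(G,Z)$. Pick a chain of primes $\fp_0\subsetneq\fp_1\subsetneq\cdots\subsetneq\fp_s$ in $\Supp_{R^{\even}}\End^{*}_{\sfT}(X)$ realizing the Krull dimension, with $\fp_0$ minimal over $\ann_{R^{\even}}\End^{*}_{\sfT}(X)$. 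By graded prime avoidance in the noetherian commutative ring $R^{\even}$, select homogeneous positive-degree even elements $r_1,\ldots,r_{s-1}$ so that each product $r_1\cdots r_i$ avoids $\fp_0$. Iteratively form the Koszul objects $K_0=X$ and $K_i=\kos{K_{i-1}}{r_i}$ via the defining triangles
\[
K_{i-1}\xra{(r_i)_{K_{i-1}}}\Si^{|r_i|}K_{i-1}\to K_i\to\Si K_{i-1}.
\]
Applying $\Hom^{*}_{\sfT}(G,-)$ to this triangle shows inductively that $(r_1,\ldots,r_i)\subseteq\ann_{R}\Hom^{*}_{\sfT}(G,K_i)$, providing the reservoir of ghost-producing elements at each step. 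Chaining these together yields a sequence of $s-1$ $G$-ghost morphisms whose overall composition realizes multiplication by $r_1\cdots r_{s-1}$ on $X$; this composition is nonzero in $\End^{*}_{\sfT}(X)$ because $r_1\cdots r_{s-1}\notin\fp_0$ and $\fp_0\supseteq\ann_{R^{\even}}\End^{*}_{\sfT}(X)$. The ghost lemma then yields $s-1\le d$, as required.

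The principal obstacle is the structural tension between the two demands on the elements $r_i$: the ghost requirement asks $r_i$ to lie in the annihilator of $\Hom^{*}_{\sfT}(G,K_{i-1})$ (a ``small'' ideal), whereas the nonvanishing demand asks the product $r_1\cdots r_{s-1}$ to escape $\ann_{R^{\even}}\End^{*}_{\sfT}(X)$ (a ``large'' ideal). Reconciling these is exactly the role of the Koszul-object formalism of Section~\ref{sec:koszulobjects}: each Koszul step $Z\rightsquigarrow\kos{Z}{r}$ enlarges the ghost-producing annihilator by adjoining $r$ while only moderately affecting the support of $\End^{*}_{\sfT}(-)$, so one can simultaneously climb the prime chain and expand the ghost ideal. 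The noetherianity and finite-length hypotheses on $\Hom^{*}_{\sfT}(X,Y)$ are essential to guarantee that the relevant supports over $R^{\even}$ are well-behaved enough for classical graded prime-avoidance arguments to go through and that the Krull dimensions computed with respect to different central actions agree, as noted just before the theorem statement.
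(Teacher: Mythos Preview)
The proposal has the right ingredients---Ghost Lemma, Koszul objects, central action---but the argument does not close, and the gap is precisely the ``structural tension'' you flag but do not resolve.

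You never coherently specify the ghost morphisms. You build Koszul objects $K_i$ but then assert that the chained composition ``realizes multiplication by $r_1\cdots r_{s-1}$ on $X$''; that would be a map $X\to\Si^{\bullet}X$ not passing through the $K_i$ at all. If the ghosts are simply the central maps $(r_i)_X\colon X\to\Si^{|r_i|}X$, then for each to be a $G$-ghost you need $r_i\in\ann_R\Hom^*_\sfT(G,X)$. But Lemma~\ref{lem:dimsym} gives $\msup\Hom^*_\sfT(G,X)=\msup\End^*_\sfT(X)$, so any such $r_i$ lies in every minimal prime over $\ann_{R^{\even}}\End^*_\sfT(X)$, in particular in your $\fp_0$, killing the nonvanishing claim. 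Enlarging the annihilator of $\Hom^*_\sfT(G,K_i)$ by Koszul steps buys nothing if the composition you test is still an endomorphism of $X$. (Also, the inclusion $(r_1,\dots,r_i)\subseteq\ann_R\Hom^*_\sfT(G,K_i)$ is not what the Koszul exact sequence gives; only powers $r_j^{2^i}$ land there, cf.\ Lemma~\ref{lem:koszul}.)

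The paper resolves this tension in two ways your sketch misses. First, the Ghost Lemma it uses (Lemma~\ref{lem:ghosts}) requires only $\Hom^n_\sfT(G,\theta_i)=0$ for $n\gg 0$, not for all $n$. Second, the ghosts are not multiplication maps at all: they are the connecting morphisms $\theta_i\colon K_i\to K_{i-1}$ in the Koszul triangles, and the image of $\Hom^*_\sfT(X,\theta_i)$ equals the kernel of $r_i^s$ on $\Hom^*_\sfT(X,K_{i-1})$, which is only \emph{eventually} zero when $r_i$ is chosen filter-regular (Lemma~\ref{lem:filter}). Nonvanishing of the composite is detected not by $X$ but by the Koszul object $\kos X{\bsr}$: each $r_i^s$ annihilates $\Hom^*_\sfT(\kos X{\bsr},-)$, so each $\Hom^*_\sfT(\kos X{\bsr},\theta_i)$ is surjective, and a support computation via Proposition~\ref{prop:flevn}(1) shows $\Hom^*_\sfT(\kos X{\bsr},K_0)$ is not eventually zero. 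Your selection criterion (``each product $r_1\cdots r_i$ avoids $\fp_0$'') never engages this Krull-height argument, so the link between the number $s-1$ of elements and the dimension is absent from your proof.
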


An intriguing feature of this result is that the invariant appearing on the right hand side of the inequality involves only the additive $\bbz$-structure of $\sfT$. Theorem~\ref{ithm:intro} is contained in Theorem~\ref{thm:main}.  The proof is based on a  systematic use of Koszul objects and elementary observations concerning `eventually noetherian modules'; this is inspired by the approach in \cite{AI}. Another important ingredient is a version of the `Ghost Lemma' from \cite{Pb}; see Lemma~\ref{lem:ghosts}. 

Our principal motivation for considering dimensions of triangulated categories is that it provides a way to obtain lower bounds on the representation dimension of an Artin algebra. Indeed, for a non-semisimple Artin algebra $A$ one has an inequality
\[
\repdim A \geq \dim \std(A)+2\,,
\]
where $\std(A)$ is the stable derived category of $A$, in the sense of Buchweitz~\cite{Bu}. As one application of the preceding result, we bound the representation dimension of $A$ by the Krull dimension of Hochschild cohomology.

\begin{corollary}
\label{icor:Hoch}
Let $k$ be an algebraically closed field and $A$ a finite dimensional $k$-algebra with radical $\fr$, where $A$ is not semi-simple. If $\Ext^{*}_A(A/\fr,A/\fr)$ is noetherian as a module over the Hochschild cohomology algebra $\HH^{*}(A)$ of $A$ over $k$, then
\[
\repdim A\geq \dim \HH^{*}(A)+1\,.
\]
\end{corollary}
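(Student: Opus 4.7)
The plan is to specialize Theorem~\ref{ithm:intro} to $\sfT=\std(A)$, $X=A/\fr$ and $R=\HH^{*}(A)$, and then combine the resulting bound on $\dim\std(A)$ with the inequality $\repdim A\geq\dim\std(A)+2$ recalled in the introduction.

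First I verify the finiteness hypotheses of Theorem~\ref{ithm:intro}. Hochschild cohomology acts naturally on $\sfD^{b}(\mod A)$, and this action passes to the Verdier quotient $\std(A)$, giving the required homomorphism $\HH^{*}(A)\to\cent^{*}(\std(A))$. It is standard that $\End^{*}_{\std(A)}(A/\fr)$ agrees with $\Ext^{*}_{A}(A/\fr,A/\fr)$ in all sufficiently large degrees, so the assumed noetherianness of $\Ext^{*}_{A}(A/\fr,A/\fr)$ over $\HH^{*}(A)$ immediately gives condition (1) of Theorem~\ref{ithm:intro} for $Y=A/\fr$. For an arbitrary $Y\in\std(A)$, I reduce to this case: since $k$ is algebraically closed, every finite-dimensional $A$-module has a composition series whose simple factors are direct summands of $A/\fr$, so every object of $\std(A)$ lies in the thick subcategory generated by $A/\fr$, and the eventually-noetherian condition propagates across triangles, shifts and direct summands. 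Condition (2) is automatic, since each $\Hom_{\std(A)}(A/\fr,\Si^{i}Y)$ is finite-dimensional over $k$ and $\HH^{0}(A)=\cent(A)$ is finite-dimensional over $k$.

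Theorem~\ref{ithm:intro} then yields
\[
\dim\std(A)\geq\dim\End^{*}_{\std(A)}(A/\fr)-1=\dim\Ext^{*}_{A}(A/\fr,A/\fr)-1,
\]
the second equality holding because the two graded modules agree on a tail and Krull dimension depends only on such a tail. Combined with $\repdim A\geq\dim\std(A)+2$, this already gives $\repdim A\geq\dim\Ext^{*}_{A}(A/\fr,A/\fr)+1$.

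The remaining, and most delicate, step is to identify $\dim\Ext^{*}_{A}(A/\fr,A/\fr)$ with $\dim\HH^{*}(A)$. Since $\Ext^{*}_{A}(A/\fr,A/\fr)$ is noetherian over $\HH^{*}(A)$, its Krull dimension equals that of $\HH^{*}(A)/I$, where $I$ is the annihilator of the module. One must therefore show that $I$ is nilpotent in $\HH^{*}(A)$, equivalently that the support variety of $A/\fr$ fills $\spec\HH^{*}(A)$. This is a standard consequence of the noetherianness hypothesis (the so-called (Fg) condition) in the work of Solberg and collaborators on support varieties for finite-dimensional algebras; granting it, $\dim\HH^{*}(A)/I=\dim\HH^{*}(A)$, which completes the proof.
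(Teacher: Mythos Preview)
Your overall strategy matches the paper's: apply Theorem~\ref{ithm:intro} to $\std(A)$ with $X=A/\fr$ and $R=\HH^{*}(A)$, combine with $\repdim A\ge\dim\std(A)+2$, and then use that the kernel of $\HH^{*}(A)\to\Ext^{*}_{A}(A/\fr,A/\fr)$ is nilpotent.

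There is, however, a genuine gap in your second paragraph. You assert as ``standard'' that $\End^{*}_{\std(A)}(A/\fr)$ agrees with $\Ext^{*}_{A}(A/\fr,A/\fr)$ in all sufficiently large degrees. This is precisely the content of Lemma~\ref{lem:evbij}, and it requires $A$ to be \emph{Gorenstein}; without that hypothesis, morphisms in the Verdier quotient $\std(A)$ are computed by a calculus of fractions and need not agree with $\Ext$ even eventually. The paper supplies the missing step in Proposition~\ref{prop:fingengor}: the noetherianness of $\Ext^{*}_{A}(A/\fr,A/\fr)$ over a centrally acting ring already forces $A$ to be Gorenstein (the argument uses Lemma~\ref{lem:dimsym} and Proposition~\ref{prop:evn} to show that finite projective dimension and finite injective dimension coincide for every $A$-module). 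Once you invoke this, Lemma~\ref{lem:evbij} applies and the rest of your argument goes through exactly as in Theorem~\ref{thm:goren} and Corollary~\ref{cor:fd}.

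A minor imprecision in your final paragraph: the nilpotence of the annihilator ideal $I$ is not a consequence of the (Fg) condition itself but of the semi-simplicity of $A/\fr\otimes_{k}A/\fr$, which holds here because $k$ is algebraically closed; the relevant reference is \cite[Proposition~4.4]{SS}, exactly as the paper cites in the proof of Corollary~\ref{cor:Hoch}.
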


This result is a special case of Corollary~\ref{cor:Hoch}. In Section~\ref{Applications} we present further applications of Theorem~\ref{thm:main}.

\section{Eventually noetherian modules}

Many of the arguments in this article are based on properties of `eventually noetherian modules' over graded commutative rings, introduced by Avramov and Iyengar~\cite[\S2]{AI}. In this section we collect the required results. For the benefit of the reader we provide (sketches of) proofs, although the results are well-known, and the arguments based on standard techniques in commutative algebra. For unexplained terminology the reader is referred to Bruns and Herzog~\cite{BH}.

\subsection*{Graded-commutative rings}
Let $R=\bigoplus_{i\ges 0}R^i$ be a \emph{graded-commutative ring}; thus $R$ is an $\bbn$-graded ring with the property that $rs =(-1)^{|r||s|}sr$ for any $r,s$ in $R$. Elements in a graded object are assumed to be homogeneous.

Let $M=\bigoplus_{i\in\bbz}M^i$ be a graded $R$-module. For any integer $n$, we set
\[
 M^{\ges n}=\bigoplus_{i\ges n}M^i\qquad\text{and}\qquad R^{+}= R^{\ges 1}\,.
\]
Note that $M^{\ges n}$ is an $R$-submodule of $M$, and that $R^{+}$ is an ideal in $R$.

As in \cite[\S2]{AI}, we say that $M$ is \emph{eventually noetherian} if the $R$-module $M^{\ges n}$ is noetherian for some integer $n$; we write $\noeth(R)$ for the full subcategory of the category of all graded $R$-modules, with objects the eventually noetherian modules. In this work, the focus is on eventually noetherian modules $M$ that have the additional property that $\length_{R^{0}}(M^{n})$ is finite for $n\gg 0$. The corresponding full subcategory of $\noeth(R)$ is denoted $\flnoeth(R)$. It is easy to verify that both $\noeth(R)$ and $\flnoeth(R)$ are abelian subcategories.

Recall that the \emph{annihilator} of $M$, which we denote $\ann_{R}M$, is the homogenous ideal of $R$ consisting of elements $r$ such that $r\cdot M=0$. The following remark is easily justified. It allows one, when considering eventually noetherian modules, to pass to a situation where the ring itself is noetherian.

\begin{remark}
\label{rem:passtonoeth}
Suppose that the $R$-module $M^{\ges n}$ is noetherian. Set $I=\ann_R(M^{\ges n})$.  The ring $R/I$ is then noetherian, and $M^{\ges n}$ is a finitely generated and faithful $R/I$-module.  If in addition $\length_{R^0}(M^i)$ is finite for $i\geq n$, then $(R/I)^0$ is artinian.
\end{remark}

One way to study modules over graded-commutative rings is to pass to the subring $R^{\even}$ consisting of elements of even degree, which is then a \emph{commutative graded} ring: $rs=sr$ for any $r,s$ in $R^{\even}$. In this work, this passage is facilitated by the following observation; confer the proof of \cite[Theorem 1.5.5]{BH}.

\begin{lemma}
\label{lem:reven}
Let $R$ be a graded-commutative ring, and let $M$ be an $R$-module.
\begin{enumerate}[{\quad\rm(1)}]
\item If $M$ is in $\noeth(R)$, then  it is also in $\noeth(R^{\even})$.
\item If $M$ is in $\flnoeth(R)$, then  it is also in $\flnoeth(R^{\even})$.
\end{enumerate}
\end{lemma}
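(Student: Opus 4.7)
The plan is to reduce both parts to the following key observation: if $N \subseteq M$ is a graded $R^{\even}$-submodule that happens to lie entirely in one parity (either $M_{\even}=\bigoplus_{i\text{ even}}M^i$ or $M_{\odd}=\bigoplus_{i\text{ odd}}M^i$), then the $R$-submodule $RN$ recovers $N$ after intersecting with the correct parity summand. Concretely, for $N\subseteq M_{\even}$, any element of $RN$ has the form $\sum r_jx_j$ with $x_j\in N$; decomposing each $r_j$ into its even and odd components shows that the even part of $\sum r_j x_j$ lies in $R^{\even}N=N$, while the odd part lies in $M_{\odd}$. Hence $RN\cap M_{\even}=N$, and symmetrically $RN\cap M_{\odd}=N$ when $N\subseteq M_{\odd}$.

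For part (1), I would fix $n$ so that $M^{\ges n}$ is noetherian over $R$ and start with an ascending chain $N_1\subseteq N_2\subseteq\cdots$ of graded $R^{\even}$-submodules of $M^{\ges n}$. Since each $N_i$ is graded, it splits as $N_i=N_i^{\even}\oplus N_i^{\odd}$ with $N_i^{\even}=N_i\cap M_{\even}$ and $N_i^{\odd}=N_i\cap M_{\odd}$, and both summands are themselves graded $R^{\even}$-submodules. It suffices to show each of the two induced chains stabilizes. Since $R$ is concentrated in nonnegative degrees, $RN_i^{\even}\subseteq M^{\ges n}$ for every $i$, so the chain of $R$-submodules $RN_1^{\even}\subseteq RN_2^{\even}\subseteq\cdots$ stabilizes by $R$-noetherianness of $M^{\ges n}$. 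Intersecting with $M_{\even}$ and using the key observation above recovers the original chain, which therefore stabilizes. The same argument handles the odd chain.

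For part (2), having $M\in\flnoeth(R)$ means that for $n$ large enough both $M^{\ges n}$ is $R$-noetherian and $\length_{R^0}(M^i)<\infty$ for $i\ge n$. By part (1), $M^{\ges n}$ is noetherian over $R^{\even}$, and since $(R^{\even})^0=R^0$ the length condition is literally the same over the subring, so $M\in\flnoeth(R^{\even})$.

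The only real subtlety is ensuring that one works with graded $R^{\even}$-submodules (so that the parity decomposition is legitimate); this is automatic because $\noeth$ was defined inside the category of graded modules. Beyond that the argument is formal, and I do not anticipate any serious obstacle.
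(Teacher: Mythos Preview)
Your argument is correct and rests on the same parity-splitting trick as the paper's, but the packaging differs. The paper first invokes Remark~\ref{rem:passtonoeth} to reduce to the case where $R$ itself is noetherian and $M$ finitely generated, and then applies the intersection identity $IR\cap R^{\even}=I$ at the level of \emph{ideals} of $R$ to conclude that $R^{\even}$ is noetherian and $R$ (hence $M$) is module-finite over it. You apply the same identity, $RN\cap M_{\even}=N$, directly at the level of \emph{submodules of $M^{\ges n}$}, bypassing the reduction to a noetherian ring altogether. Your route is slightly more economical for the bare statement; the paper's route has the advantage of producing the structural byproduct that $R$ is a finite $R^{\even}$-module (after reduction), which is the fact one actually wants in later arguments.
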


\begin{proof}
Suppose $M$ is in $\noeth(R)$. By Remark~\ref{rem:passtonoeth}, one can assume $R$ is itself noetherian and $M$ a finitely generated $R$-module. It then suffices to prove that the subring $R^{\even}$ is noetherian and that $R$ is finitely generated as a module over it. Observe that there is a decomposition $R=R^{\even}\oplus R^{\odd}$  as $R^{\even}$-modules. In particular, for any ideal $I\subseteq R^{\even}$ one has $IR\cap R^{\even}=I$, and hence  $R$ noetherian implies $R^{\even}$ noetherian. By the same token, one obtains  that $R^{\odd}$, and hence also $R$, is a noetherian $R^{\even}$-module.
\end{proof}

\subsection*{Dimension over a commutative graded ring}
Let $R$ be a commutative graded ring. We recall some facts concerning the support of an $R$-module $M$, which we denote $\Supp M$. It is convenient to employ also the following notation:
\begin{align*}
  \proj R &= \{\text{$\fp$ is a homogeneous prime in $R$ with $\fp\not\supseteq R^{+}$}\} \\
  \msup M &= \{\fp\in\proj R\mid M_{\fp}\ne 0\}\\
  \ass_{R}^{+}M &=\{\fp\in\proj R\mid \Hom_{R_{\fp}}(R_{\fp}/\fp R_{\fp},M_{\fp})\ne 0\}.
\end{align*}
Evidently, $\msup M = \Supp M\cap \proj R$ and $\ass_{R}^{+}M=\ass_{R}M\cap \proj R$, where $\ass_{R}M$ is the set of associated primes of $M$. The next result is readily verified. 

\begin{lemma}
\label{lem:plus}
\pushQED{\qed}%
Let $R$ be a commutative graded ring and $M$ a graded $R$-module. 
\begin{enumerate}[{\quad\rm(1)}]
\item
For any integer $n$, one has equalities
\[
\msup {(M^{\ges n})} = \msup M\quad\text{and}\quad \ass_{R}^{+}(M^{\ges n}) = \ass_{R}^{+}M\,.
\]
\item If $L\subseteq M$ is a submodule, then 
\[
\msup L\subseteq \msup M\quad\text{and}\quad \ass_{R}^{+}L \subseteq \ass_{R}^{+}M\,.\qedhere
\]
\end{enumerate}
\end{lemma}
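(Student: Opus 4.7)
The plan is to reduce both claims to the basic observation that for any $\fp\in\proj R$ there exists a homogeneous element $r\in R^{+}\setminus\fp$, so that after homogeneous localization at $\fp$ multiplication by $r$ becomes an invertible degree-raising operation. Once this is in place, both parts of the lemma follow from standard left-exactness arguments.

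For (1), fix $\fp\in\proj R$ and choose a homogeneous $r\in R^{+}\setminus\fp$, of degree $d\ges 1$. Given any homogeneous $x\in M$, one can choose $k\ges 0$ with $|x|+kd\ges n$, so that $r^{k}x\in M^{\ges n}$; since $r$ is a unit in $M_{\fp}$, we can then write $x=r^{-k}(r^{k}x)$ in $M_{\fp}$. This shows the natural inclusion $(M^{\ges n})_{\fp}\hookrightarrow M_{\fp}$ is in fact an equality. The equality $\msup(M^{\ges n})=\msup M$ is then immediate. For associated primes one applies $\Hom_{R_{\fp}}(R_{\fp}/\fp R_{\fp},-)$ to this identification, giving
\[
\Hom_{R_{\fp}}(R_{\fp}/\fp R_{\fp},(M^{\ges n})_{\fp})=\Hom_{R_{\fp}}(R_{\fp}/\fp R_{\fp},M_{\fp}),
\]
so $\ass_{R}^{+}(M^{\ges n})=\ass_{R}^{+}M$.

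For (2), the inclusion $L\subseteq M$ localizes to an injection $L_{\fp}\hookrightarrow M_{\fp}$ at every homogeneous $\fp$, since (homogeneous) localization is exact. Hence $L_{\fp}\ne 0$ forces $M_{\fp}\ne 0$, giving $\msup L\subseteq \msup M$. For associated primes, left-exactness of $\Hom$ yields an injection
\[
\Hom_{R_{\fp}}(R_{\fp}/\fp R_{\fp},L_{\fp})\hookrightarrow \Hom_{R_{\fp}}(R_{\fp}/\fp R_{\fp},M_{\fp}),
\]
so nonvanishing of the left side forces nonvanishing of the right, whence $\ass_{R}^{+}L\subseteq \ass_{R}^{+}M$.

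There is no real obstacle here; the only subtlety worth highlighting is that one must work throughout with homogeneous primes and the graded (homogeneous) localization, and the crucial input is precisely the defining condition $\fp\not\supseteq R^{+}$ for membership in $\proj R$, which guarantees the existence of the degree-raising unit $r$ used in part (1).
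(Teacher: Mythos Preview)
Your proof is correct. The paper does not actually prove this lemma; it merely states that the result ``is readily verified'' and places a \qedsymbol\ at the end of the statement. Your argument supplies exactly the routine verification the authors had in mind: the key point, as you identify, is that for $\fp\in\proj R$ the condition $\fp\not\supseteq R^{+}$ yields a homogeneous unit of positive degree in the localization, forcing $(M^{\ges n})_{\fp}=M_{\fp}$, after which everything follows from exactness of localization and left-exactness of $\Hom$.
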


A module $M$ is said to be \emph{eventually zero} if $M^{\ges n}=0$ for some integer $n$. The next result is part of \cite[\S2.2]{AI}, where it is stated without proof. We give details, for the convenience of readers.

\begin{proposition}
\label{prop:evn}
Let $R$ be a commutative graded ring and $M$ an eventually noetherian $R$-module. The set $\ass_{R}^{+}M$ is finite and the conditions below are equivalent:
\begin{enumerate}[{\quad\rm(i)}]
\item
$\ass_{R}^{+} M=\emptyset$;
\item 
$\msup M=\emptyset$;
\item
$M$ is eventually zero.
\end{enumerate}
\end{proposition}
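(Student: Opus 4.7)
The plan is to reduce to a finitely generated module over a noetherian ring, where the classical theory of associated primes applies, and then deduce all three claims by standard arguments. By Remark~\ref{rem:passtonoeth}, setting $I=\ann_{R}M^{\ges n}$ makes $R/I$ noetherian and $M^{\ges n}$ a finitely generated $R/I$-module. Lemma~\ref{lem:plus}(1) identifies $\msup M$ with $\msup M^{\ges n}$ and $\ass_{R}^{+}M$ with $\ass_{R}^{+}M^{\ges n}$, and $M$ is eventually zero exactly when $M^{\ges n}$ is. Since $I$ annihilates $M^{\ges n}$, the invariants $\msup$ and $\ass^{+}$ computed over $R$ agree with those computed over $R/I$ under the usual identification of primes of $R/I$ with primes of $R$ containing $I$. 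Without loss of generality, I may therefore assume from the outset that $R$ is a noetherian commutative graded ring and $M$ is a finitely generated graded $R$-module.

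Finiteness of $\ass_{R}^{+}M$ is then immediate from finiteness of $\ass_{R}M$. The equivalence (i) $\Leftrightarrow$ (ii) follows from the decomposition $\Supp M=\bigcup_{\fq\in\ass_{R}M}V(\fq)$: if $\ass_{R}^{+}M=\emptyset$ then every associated prime contains $R^{+}$, hence so does every prime in $\Supp M$, forcing $\msup M=\emptyset$; conversely, $\ass_{R}^{+}M\subseteq\ass_{R}M\cap\proj R\subseteq\Supp M\cap\proj R=\msup M$.

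For (iii) $\Rightarrow$ (ii), given $\fp\in\proj R$ I would pick a homogeneous $s\in R^{+}\setminus\fp$; then $s$ acts invertibly on $M_{\fp}$, but sufficiently high powers of $s$ raise degrees past the vanishing threshold of $M$, forcing $M_{\fp}=0$. For the decisive implication (ii) $\Rightarrow$ (iii), the hypothesis forces $R^{+}\subseteq\sqrt{\ann_{R}M}$, so $(R^{+})^{k}M=0$ for some $k\ge 1$. Since $R$ is noetherian and generated as an $R^{0}$-algebra by finitely many elements of positive degree at most some $D$, a monomial count yields $R^{\ges kD}\subseteq(R^{+})^{k}$; combined with finite generation of $M$ by elements of bounded degree, this produces an integer $m$ with $M^{\ges m}=0$.

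The main obstacle is the bookkeeping in (ii) $\Rightarrow$ (iii): translating the ideal-theoretic statement $(R^{+})^{k}M=0$ into the degree-theoretic conclusion $M^{\ges m}=0$. This requires the observation that, in a noetherian graded ring generated by finitely many positive-degree elements, high powers of $R^{+}$ exhaust all sufficiently large degrees of $R$. Everything else is classical material on associated primes and supports of finitely generated modules over noetherian rings.
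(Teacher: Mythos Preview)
Your proof is correct and follows essentially the same route as the paper: reduce via Remark~\ref{rem:passtonoeth} and Lemma~\ref{lem:plus}(1) to a finitely generated module over a noetherian ring, invoke finiteness of associated primes, and for (ii)~$\Rightarrow$~(iii) use that $R^{+}$ is nilpotent modulo the annihilator. The only cosmetic difference is that the paper passes all the way to $M$ \emph{faithful}, so that $\Supp M=\spec R$ and one concludes directly that $R^{+}$ is nilpotent in $R$ itself (hence $R$, and then $M$, is eventually zero); your version with $R^{+}\subseteq\sqrt{\ann_{R}M}$ and the explicit monomial count $R^{\ges kD}\subseteq(R^{+})^{k}$ achieves the same thing without that extra normalization.
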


\begin{proof}
In view of Remark~\ref{rem:passtonoeth} and Lemma~\ref{lem:plus}(1), one may assume $R$ is noetherian and that $M$ is a faithful $R$-module. In this case $\ass_{R}M$ is a finite set and therefore
$\ass_{R}^{+}M$ is finite; see \cite[Theorem~6.5]{Mat}.

An ideal $\fp\in\spec R$ belongs to $\Supp M$ if and only if there exists $\fq\in\ass_RM$ with $\fq\subseteq\fp$; see \cite[Theorem~6.5]{Mat}.  From this the implications (iii) $\implies$ (ii) and (ii) $\iff$ (i) are obvious consequences.

It remains to show (ii) $\implies$ (iii). Since the $R$-module $M$ is finitely generated and faithful, one has that $\Supp M = \spec R$. Thus, $\msup M=\emptyset$ implies $R^{+}\subseteq \fp$ for each $\fp\in \spec R$, hence the ideal $R^{+}$ is nilpotent. Since $R$ is noetherian, this implies that $R$ is eventually zero, and hence also that $M$ is eventually zero.
\end{proof}

We say that an element $r\in R^{+}$ is \emph{filter-regular} on $M$ if $\Ker(M\xra{r} M)$ is eventually zero. This notion is a minor variation on a well-worn theme in commutative algebra; confer, for instance, Schenzel, Trung, and Cuong~\cite[\S2.1]{CST}.

\begin{lemma}
\label{lem:filter}
Let $R$ be a commutative graded ring and $M$ an eventually noetherian $R$-module. There then exists an element in $R^{+}$ that is filter-regular on $M$.
\end{lemma}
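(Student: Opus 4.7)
The plan is to reduce to the noetherian setting, use finiteness of $\ass_R^+M$ supplied by Proposition~\ref{prop:evn}, and then find the desired $r$ via graded prime avoidance.

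First I would invoke Remark~\ref{rem:passtonoeth}: replacing $R$ by $R/\ann_R(M^{\ges n})$ for $n$ large enough that $M^{\ges n}$ is noetherian, one reduces to the case where $R$ itself is noetherian and $M$ is a finitely generated faithful $R$-module. This replacement does not affect either $\ass_R^+M$ or the property of being eventually zero, since both concern only the tail $M^{\ges n}$ (use Lemma~\ref{lem:plus}(1)). In this reduced situation Proposition~\ref{prop:evn} shows $\ass_R^+M$ is a finite set of homogeneous primes, and by definition every $\fp\in\ass_R^+M$ satisfies $R^+\not\subseteq\fp$.

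Next I would apply the graded version of prime avoidance to the homogeneous ideal $R^+$ and the finite family $\{\fp\}_{\fp\in\ass_R^+M}$: since $R^+$ is not contained in any of these primes, there exists a homogeneous element $r\in R^+$ such that $r\notin\fp$ for every $\fp\in\ass_R^+M$. I claim such an $r$ is filter-regular. Indeed, set $K=\Ker(M\xra{r}M)$, which is a submodule of $M$, hence an eventually noetherian $R$-module, and by Lemma~\ref{lem:plus}(2) one has $\ass_R^+K\subseteq\ass_R^+M$. If there were $\fp\in\ass_R^+K$, then on one hand $r\notin\fp$ by choice, while on the other $rK=0$ forces $r\in\ann_RK\subseteq\fp$ (any associated prime contains the annihilator), a contradiction. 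Hence $\ass_R^+K=\emptyset$, and Proposition~\ref{prop:evn} delivers the conclusion that $K$ is eventually zero.

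The only subtlety I anticipate is the graded prime avoidance step: one needs a \emph{homogeneous} element of $R^+$ avoiding finitely many homogeneous primes that do not contain $R^+$, which is the standard graded-commutative prime avoidance lemma and does not require any extra hypothesis on $R^0$. Everything else is direct bookkeeping using the tools already established in the section.
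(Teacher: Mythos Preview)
Your argument is correct and follows essentially the same route as the paper's proof: invoke Proposition~\ref{prop:evn} for the finiteness of $\ass_R^+M$, apply graded prime avoidance to pick $r\in R^+$ outside every such prime, and then use Lemma~\ref{lem:plus}(2) together with Proposition~\ref{prop:evn} to conclude that $K=\Ker(r)$ is eventually zero. The only cosmetic differences are that your initial reduction to noetherian $R$ is unnecessary (Proposition~\ref{prop:evn} already applies to eventually noetherian modules), and that you verify $\ass_R^+K=\emptyset$ via ``$r\in\ann_R K\subseteq\fp$'' whereas the paper observes directly that $K_\fp=0$ since $r\notin\fp$ becomes a unit in $R_\fp$.
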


\begin{proof}
Proposition~\ref{prop:evn} yields that the set $\ass_{R}^{+}M$ is finite, so by prime avoidance \cite[Lemma~1.5.10]{BH} there exists an element $r$ in $R^{+}$ not contained in any prime $\fp$ in $\ass_{R}^{+}M$. This element is filter-regular on $M$.

Indeed, for $K=\Ker(M\xra{r}M)$, one has $\ass_{R}^{+}K\subseteq\ass_{R}^{+}M$; see Lemma~\ref{lem:plus}(2). However, for any $\fp$ in $\ass_{R}^{+}M$ one has $K_{\fp}=0$, since $r\not\in\fp$, and hence $\ass_{R}^{+}K=\emptyset$. Since $K$ is eventually noetherian, being a submodule of $M$, Proposition~\ref{prop:evn} applies and yields that $K$ is eventually zero.
\end{proof}

As usual, the (Krull) \emph{dimension} of a module $M$ over $R$ is the number
\[
\dim_{R}M =\sup\left\{ d\in\bbn \left| 
\begin{gathered} 
\text{there exists a chain of prime ideals} \\
\text{$\fp_0\subset \fp_1\subset \cdots \subset \fp_d$ in $\Supp M$}
\end{gathered}
\right.\right\}\,.
\]
When $M$ is in $\flnoeth(R)$ one can compute its dimension in terms of the rate of growth of its components. To make this precise, it is convenient to introduce the \emph{complexity} of a sequence of non-negative integers $(a_n)$ as the number
\[
\cx (a_n)= \inf \left\{d\in\bbn \left|\,
\begin{gathered}\text{there exists a real number $c$ such that}\\
\text{$a_n\leq cn^{d-1}$ for $n\gg0$}
\end{gathered}\right.\right\}\,.
\]
For basic properties of this notion see, for example, \cite[\S2 and Appendix]{Av}.  As usual, the set of prime ideals of $R$ containing a given ideal $I$ is denoted $\mcV(I)$.

\begin{proposition}
\label{prop:flevn}
Let $R$ be a commutative graded ring and $M\in \flnoeth(R)$.
\begin{enumerate}[{\quad\rm(1)}]
\item
If $r_{1},\dots,r_{n}$ are elements in $R^{+}$ with $n < \dim_{R}M$, then $\mcV(\bsr) \cap \msup M \neq \emptyset$.
\item
One has an equality $\dim_{R}M=\cx(\length_{R^{0}}(M^{n}))$.
\end{enumerate}
\end{proposition}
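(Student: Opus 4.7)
The plan is to reduce both parts to a clean noetherian setting. Choose $m$ large enough that $M^{\ges m}$ is noetherian and has finite-length components, and put $I=\ann_R(M^{\ges m})$; Remark~\ref{rem:passtonoeth} then supplies a noetherian graded ring $R/I$ over which $M^{\ges m}$ is finitely generated and faithful, with $(R/I)^0$ artinian. Since the low-degree quotient $M/M^{\ges m}$ is annihilated by $R^{\ges m}$ and hence supported only at primes containing $R^+$, Lemma~\ref{lem:plus}(1) gives $\msup M = \msup M^{\ges m}$, and similar bookkeeping shows that $\dim_R M$ and the asymptotic behavior of $\length_{R^0}(M^n)$ both descend unchanged to the same invariants of $M^{\ges m}$ over $R/I$. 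I therefore assume throughout that $R$ is noetherian, $R^0$ is artinian, and $M$ is finitely generated and faithful over $R$.

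For part (1), I invoke the dimension inequality
\[
\dim_R M/(\bsr)M \;\geq\; \dim_R M - n \;\geq\; 1,
\]
a standard consequence of Krull's height theorem for Noether-graded algebras over an artinian base. The module $N:=M/(\bsr)M$ lies in $\flnoeth(R)$ and has $\dim_R N \geq 1$; since $\Supp N \subseteq \Supp M \cap \mcV(\bsr)$, it suffices to prove the special case that any $N\in\flnoeth(R)$ with $\dim_R N \geq 1$ satisfies $\msup N \neq \emptyset$. For this, fix a strict chain $\fp_0 \subsetneq \fp_1$ of homogeneous primes in $\Supp N$. Were both to contain $R^+$, their intersections with $R^0$ would yield a strict chain of primes in the artinian ring $(R/\ann N)^0$, which is impossible. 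So one of $\fp_0,\fp_1$ lies in $\proj R$ and thus in $\msup N$.

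For part (2), I apply the Hilbert-Serre theorem. In the reduced setting, $R^+$ is a finitely generated ideal, so $R$ is a finitely generated graded algebra over $R^0$ with homogeneous algebra generators of positive degrees $d_1,\dots,d_s$. The Poincar\'e series
\[
P_M(t) \;=\; \sum_{n\ges 0} \length_{R^0}(M^n)\, t^n
\]
is then a rational function of the form $p(t)/\prod_{i=1}^{s}(1-t^{d_i})$, and after cancellation of common factors the order of its pole at $t=1$ equals $\dim_R M$. Partial-fraction expansion then exhibits $\length_{R^0}(M^n)$, for $n\gg 0$, as a quasi-polynomial in $n$ of degree $\dim_R M - 1$, which by the definition of complexity gives $\cx(\length_{R^0}(M^n)) = \dim_R M$.

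The most delicate step is the bookkeeping under the noetherian reduction --- ensuring that replacing $M$ by $M^{\ges m}$ and $R$ by $R/I$ does not perturb $\dim_R M$, $\msup M$, or the asymptotic length data. These verifications are handled by Lemma~\ref{lem:plus} together with the observation that $R^{\ges m}\subseteq\ann(M/M^{\ges m})$ forces the support of the low-degree quotient into $\mcV(R^+)$. The remaining ingredients are classical; as an alternative to Hilbert--Serre, part (2) admits a self-contained induction on $\dim_R M$ using the filter-regular element supplied by Lemma~\ref{lem:filter} to decrement the dimension and the complexity in lockstep, with base case $\dim_R M = 0$ where $M$ has finite total $R^0$-length and hence complexity zero.
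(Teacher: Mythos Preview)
Your argument follows the paper's route exactly: reduce via Remark~\ref{rem:passtonoeth} to a finitely generated faithful module over a noetherian graded ring with artinian degree-zero part, then invoke Krull's height theorem for (1) and Hilbert--Serre (which is precisely \cite[Theorem~13.2]{Mat}) for (2). The paper's proof is a two-sentence citation of these same results; you have merely unpacked them, and your path to (1) through $\dim_R M/(\bsr)M\geq 1$ together with the observation that a chain of length one in $\Supp N$ cannot lie entirely over $\mcV(R^{+})$ when $R^{0}$ is artinian is a clean way to make the height-theorem step explicit.

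One small slip in the reduction bookkeeping: the claim that $R^{\ges m}$ annihilates $M/M^{\ges m}$ is false once $M$ has nonzero components in negative degrees, since an element of degree $-k$ needs $R^{\ges m+k}$, not $R^{\ges m}$, to land in $M^{\ges m}$. What \emph{is} true, and suffices for your conclusion, is that every homogeneous element of $M/M^{\ges m}$ is annihilated by a suitable power of any fixed $r\in R^{+}$; hence $(M/M^{\ges m})_{\fp}=0$ whenever $\fp\not\supseteq R^{+}$, and $\Supp(M/M^{\ges m})\subseteq\mcV(R^{+})$ as you want. The paper's own proof glosses over this same point.
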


\begin{proof}
By Remark~\ref{rem:passtonoeth}, one may assume  $R$ is noetherian, $R^{0}$ is artinian, and $M$ is a faithful, finitely generated $R$-module. Part (1) then follows from the Krull height theorem; see \cite[Theorem~13.5]{Mat}, while (2) is contained in \cite[Theorem~13.2]{Mat}.
\end{proof}

\subsection*{Dimension over a graded-commutative ring}
Let $R$ be a graded-commutative ring. For each $R$-module $M$ in $\flnoeth(R)$, we introduce its \emph{dimension} as the number
\[
\dim_{R}M = \cx(\length_{R^{0}}(M^{n}))\,.
\]

It follows from Lemma~\ref{lem:reven} and Proposition~\ref{prop:flevn}(2) that this number is finite and coincides with the dimension of $M$ as a module over $R^{\even}$. This remark will be used without further comment.

\begin{proposition}
\label{prop:dimbasechange}
Let $R\to S$ be a homomorphism of graded-commutative rings and $M$ an $S$-module.  If $M$, viewed as an $R$-module by restriction of scalars, is in $\flnoeth(R)$, then $\dim_{R}M = \dim_{S}M$.
\end{proposition}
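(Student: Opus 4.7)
The plan is to establish the two inequalities $\dim_{S}M\le\dim_{R}M$ and $\dim_{R}M\le\dim_{S}M$ directly at the level of the defining complexity formula. First I would verify that $M$ also lies in $\flnoeth(S)$. Pick $n$ so that $M^{\ges n}$ is noetherian over $R$ and $\length_{R^{0}}(M^{i})<\infty$ for $i\ge n$; then any ascending chain of $S$-submodules of $M^{\ges n}$ is also a chain of $R$-submodules, and any $S^{0}$-submodule of $M^{i}$ is an $R^{0}$-submodule, so $M^{\ges n}$ is $S$-noetherian and
\[
\length_{S^{0}}(M^{i})\le \length_{R^{0}}(M^{i})<\infty
\]
for $i\ge n$. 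Taking complexities of this inequality already delivers $\dim_{S}M\le\dim_{R}M$.

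For the reverse inequality I would apply Remark~\ref{rem:passtonoeth} to $M$ viewed as an $S$-module. With $J=\ann_{S}(M^{\ges n})$ the ring $(S/J)^{0}$ is artinian, and therefore has only finitely many simple modules $W_{1},\dots,W_{k}$. Each $W_{j}$ that actually occurs as a $(S/J)^{0}$-composition factor of some $M^{i}$ (with $i\ge n$) appears as an $R^{0}$-subquotient of the finite-length $R^{0}$-module $M^{i}$, so $\length_{R^{0}}(W_{j})$ is finite. Set $C=\max_{j}\length_{R^{0}}(W_{j})$, where the maximum is taken over those $W_{j}$ that appear. Since $J$ annihilates each $M^{i}$, an $S^{0}$-composition series coincides with an $(S/J)^{0}$-composition series, and refining it to an $R^{0}$-composition series gives
\[
\length_{R^{0}}(M^{i})\le C\cdot \length_{S^{0}}(M^{i})\qquad\text{for } i\ge n.
\]
Passing to complexities yields $\dim_{R}M\le\dim_{S}M$, finishing the argument.

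The only subtle point is the finiteness of the constant $C$, which rests on the two observations just noted: the artinian ring $(S/J)^{0}$ has only finitely many simples, and each of them that genuinely occurs in some $M^{i}$ inherits finite $R^{0}$-length from $M^{i}$. No further obstacle is anticipated.
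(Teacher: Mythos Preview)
Your argument is correct. Both proofs follow the same overall plan: show $M\in\flnoeth(S)$, observe $\length_{S^{0}}(M^{i})\le\length_{R^{0}}(M^{i})$ directly, and then produce a uniform constant $C$ with $\length_{R^{0}}(M^{i})\le C\cdot\length_{S^{0}}(M^{i})$ for $i\gg 0$. The difference lies in how the constant is obtained. The paper reduces (via Remark~\ref{rem:passtonoeth} and passage to $R/\Ker(R\to S)$) to the case where $M$ is faithful over both rings, and then uses the chain of embeddings $R\hookrightarrow S\hookrightarrow\Hom_{R}(M,M)$ together with noetherianity of $M$ over $R$ to force $S$ to be a finitely generated $R$-module; the constant is then $C=\length_{R^{0}}(S^{0})$, which bounds the $R^{0}$-length of \emph{every} simple $S^{0}$-module simultaneously. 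Your route bypasses this module-finiteness step entirely: you only need that $(S/J)^{0}$ is artinian (hence commutative with finitely many simples) and that each simple actually occurring in some $M^{i}$ inherits finite $R^{0}$-length as a subquotient. This is a bit more elementary and avoids the $\Hom_{R}(M,M)$ trick, at the cost of a constant that is tailored to $M$ rather than universal for all $S^{0}$-modules.
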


\begin{proof}
The module $M$ is in $\flnoeth(S)$ as well and therefore, by Remark~\ref{rem:passtonoeth}, one can pass to a situation where $S$ is noetherian and $M$ is a faithful $S$-module that is also noetherian over $R$. Passing to $R/I$, where $I$ is the kernel of the homomorphism $R\to S$, one may also assume that the homomorphism is
injective. Since one has injective homomorphisms of $R$-modules
\[
R\hookrightarrow S\hookrightarrow \Hom_{R}(M,M)\,,
\]
one thus obtains that the ring $R$ itself is noetherian with $R^{0}$ artinian, and that $S$ is a finitely generated $R$-module. This implies that the $R^{0}$-module $S^{0}$ is finitely-generated, and hence, for any $S^{0}$-module $N$, one has inequalities
\[
\length_{S^{0}}N \leq \length_{R^{0}}N \leq (\length_{R^{0}}S^{0})(\length_{S^{0}}N)\,.
\]
This yields $\dim_{R}M = \dim_{S}M$, as claimed.
\end{proof}

\section{Koszul objects}
\label{sec:koszulobjects}
Let $\sfT$ be a triangulated category. For any objects $X$ and $Y$ in $\sfT$, we set
\[
\Hom^{*}_{\sfT}(X,Y) = \bigoplus_{n\in\bbz}\Hom_{\sfT}(X,\Si^{n}Y) \quad\text{and}\quad
\End^{*}_{\sfT}(X) = \Hom^{*}_{\sfT}(X,X)\,.
\]
The \emph{graded center} of $\sfT$, which we denote $\cent^{*}(\sfT)$, consists in degree $n$ of natural transformations $\eta\col\id_{\sfT}\to \Si^{n}$ satisfying $\eta\Sigma = (-1)^{n}\Sigma\eta$. Composition gives $\cent^{*}(\sfT)$ a structure of a graded-commutative ring; see, for instance, \cite[\S3]{BF}, especially Lemma~3.2.1, which explains the signed commutation rule, and also \cite{Li}.

In what follows, we assume that a graded-commutative ring $R$ acts \emph{centrally on $\sfT$}, via a homomorphism $R\to \cent^{*}(\sfT)$. What this amounts to is specifying for each $X$ in $\sfT$ a homomorphism of rings $\phi_{X}\col R\to \End^{*}_{\sfT}(X)$ such that the induced $R$-module structures on $\Hom^{*}_{\sfT}(X,Y)$ coincide up to the usual sign rule:
\[
\eta\circ \phi_{X}(r) = (-1)^{|r||\eta|}\phi_{Y}(r)\circ \eta
\]
for any $\eta\in \Hom^{*}_{\sfT}(X,Y)$ and $r\in R$. 

We now recall an elementary, and extremely useful, construction.

\subsection*{Koszul objects}
Let $r$ be a homogeneous element in $R$ of degree $d=|r|$. Given an object $X$ in $\sfT$, we denote $\kos Xr$ any object that appears in an exact triangle
\begin{equation}
\label{eq:koszul}
X\xlto{r} \Si^{d}X\lto \kos Xr \lto \Si  X\,.
\end{equation}
It is well-defined up to isomorphism; we call it a \emph{Koszul object of $r$ on $X$}. 

Let $Y$ be an object in $\sfT$ and set $M=\Hom^*_{\sfT}(X,Y)$. Applying $\Hom^*_{\sfT}(-,Y)$ to the triangle above yields an exact sequence of $R$-modules:
\begin{align*}
M[d+1] \xlto{\mp r} M[1]\lto \Hom^*_{\sfT}(\kos Xr,Y)\lto M[d]\xlto{\pm r}M[0]\,.
\end{align*}
This gives rise to an exact sequence of graded $R$-modules
\begin{equation}
\label{eq:koszul-les}
0\lto (M/rM)[1] \lto \Hom^*_{\sfT}(\kos Xr, Y)\lto (0:r)_M[d]\lto 0\,,
\end{equation}
where $(0:r)_{M}$ denotes $\{m\in M\mid r\cdot m=0\}$.

Applying the functor $\Hom^*_{\sfT}(Y,-)$ results in a similar exact sequence. 

Given a sequence of elements $\bsr=r_1,\ldots,r_n$ in $R$, consider objects $X_i$ defined by
\begin{equation}
\label{eq:koszul-defn}  
X_i = \begin{cases}
X & \text{for $i=0$,}\\
\kos{X_{i-1}}{r_i} & \text{for $i\geq 1$.}
\end{cases}
\end{equation}
Set $\kos X{\bsr} = X_n$; this is a \emph{Koszul object of $\bsr$ on $X$}.  The result below is a straightforward consequence of \eqref{eq:koszul-les} and an induction on $n$; see \cite[Lemma 5.11(1)]{BIK:2008}.

\begin{lemma}
\label{lem:koszul}
\pushQED{\qed}%
Let $n\ge 1$ be an integer and set $s=2^{n}$. For any sequence of elements $\bsr= r_{1},\dots,r_{n}$ in $R^{+}$, and any object $X\in \sfT$ one has that
\[
r_{i}^{s} \cdot \Hom^{*}_{\sfT}(\kos X{\bsr},-)=0 
=  r_{i}^{s}\cdot \Hom^{*}_{\sfT}(-,\kos X{\bsr}) \quad \text{for $i=1,\dots,n$}. \qedhere
\]
\end{lemma}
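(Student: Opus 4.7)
The plan is to proceed by induction on $n$, applying the exact sequence \eqref{eq:koszul-les} at each step. The driving observation is that both end terms of that sequence, namely $(M/rM)[1]$ and $(0:r)_{M}[|r|]$, are annihilated by $r$ itself (degree shifts being irrelevant to annihilators), so whatever exponent of $r_{i}$ already kills $M$ will kill the middle term to twice that power.

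For the base case $n=1$, I would take $M=\Hom^{*}_{\sfT}(X,Y)$ and read off \eqref{eq:koszul-les} directly: it sandwiches $\Hom^{*}_{\sfT}(\kos X{r_{1}},Y)$ between two modules on which $r_{1}$ acts as zero, so $r_{1}^{2}=r_{1}^{s}$ annihilates the middle. The corresponding statement with the Koszul object in the second argument comes from the parallel exact sequence obtained by applying $\Hom^{*}_{\sfT}(Y,-)$ to the triangle \eqref{eq:koszul}, which has the same structural features.

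For the inductive step, I would write $X_{n-1}=\kos X{r_{1},\dots,r_{n-1}}$, set $N=\Hom^{*}_{\sfT}(X_{n-1},Y)$, and apply \eqref{eq:koszul-les} to $X_{n-1}$ and $r_{n}$. For $i<n$, the induction hypothesis yields $r_{i}^{2^{n-1}}\cdot N=0$, so $r_{i}^{2^{n-1}}$ kills the two end terms, forcing $r_{i}^{2\cdot 2^{n-1}}=r_{i}^{s}$ to annihilate $\Hom^{*}_{\sfT}(X_{n},Y)$. For $i=n$, the element $r_{n}$ already kills both end terms directly, so $r_{n}^{2}$---and therefore $r_{n}^{s}$, since $s\ge 2$---annihilates the middle. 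The second equality is obtained by running the same induction on the exact sequence for $\Hom^{*}_{\sfT}(Y,-)$ applied to \eqref{eq:koszul}.

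I do not anticipate any real obstacle: the argument is pure bookkeeping on exponents, with the only subtle point being that one needs to track two distinct mechanisms for annihilation in the inductive step (inherited from $N$ for $i<n$, versus coming fresh from $r_{n}$ itself for $i=n$). The factor of two per induction step, responsible for the exponent $s=2^{n}$ in the statement, is forced by having two end terms in \eqref{eq:koszul-les}, each contributing one copy of the annihilating element.
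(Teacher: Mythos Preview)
Your proposal is correct and is precisely the approach the paper indicates: the paper gives no detailed proof but states that the result is ``a straightforward consequence of \eqref{eq:koszul-les} and an induction on $n$,'' with a reference to \cite[Lemma~5.11(1)]{BIK:2008}. Your write-up supplies exactly those details, with the correct bookkeeping on the exponent doubling at each step.
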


The next construction quantifies the process of `building' objects out of a given object in the triangulated category $\sfT$.

\subsection*{Thickenings}
Given an object $G$ of $\sfT$ we write $\Thick_{\sfT}(G)$ for the thick subcategory of $\sfT$ generated by $G$. This subcategory has a filtration 
\[
\{0\}= \thickn 0G\subseteq \thickn 1G\subseteq \cdots \subseteq \bigcup_{n\ges 0}\thickn nG=\Thick_{\sfT}(G)
\]
where $\thickn 1G$ consists of retracts of finite direct sums of suspensions of $G$, and $\thickn nG$ consists of retracts of $n$-fold extensions of $\thickn 1G$. In the literature, the subcategory $\thickn nG$ has sometimes been denoted $\langle G\rangle_{n}$.

The next result is contained in \cite[Lemma~2.1]{Pb}. Similar results have appeared in Kelly~\cite{Ke}, Carlsson~\cite[Proof of Theorem~16]{Ca}, Christensen~\cite[Theorem~3.5]{Ch}, Beligiannis~\cite[Corollary 5.5]{Be}, Rouquier~\cite[Lemma~4.11]{Rq:dim}, and Avramov, Buchweitz, and Iyengar~\cite[Proposition~2.9]{ABI}.

\begin{lemma}[Ghost Lemma]
\label{lem:ghosts}
Let $\sfT$ be a triangulated category, and let $F,G$ be objects in $\sfT$. Suppose there exist morphisms
\[
K_{c}\xra{\theta_{c}} K_{c-1}\xra{\theta_{c-1}}\cdots \xra{\theta_{1}} K_{0} 
\]
in $\sfT$ such that the following conditions hold:
\begin{enumerate}[\quad\rm(1)]
\item $\Hom_{\sfT}^{n}(G,\theta_{i})=0$ for $n\gg 0$ and for each $i=1,\dots,c$;
\item $\Hom_{\sfT}^{n}(F,\theta_{1}\cdots \theta_{c})\ne 0$ for infinitely many $n\ge 0$.
\end{enumerate}
One then has that $F\not\in \thickn cG$. \qed
\end{lemma}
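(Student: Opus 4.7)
The plan is to prove the contrapositive by induction on $c$: assuming condition (1), if $F\in\thickn{c}{G}$ then $\Hom_\sfT^n(F,\theta_1\cdots\theta_c)=0$ for all sufficiently large $n$, directly contradicting condition (2). Throughout, ``for $n\gg 0$'' refers to a uniform threshold depending only on $F$, $G$, and the $\theta_i$, not on any particular morphism.

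For the base case $c=1$, if $F\in\thickn{1}{G}$ then $F$ is a retract of some finite direct sum $\bigoplus_i\Si^{a_i}G$, so $\Hom_\sfT^n(F,\theta_1)$ is a retract of $\bigoplus_i\Hom_\sfT^{n-a_i}(G,\theta_1)$. By condition (1) applied to $\theta_1$, each summand vanishes once $n-a_i$ exceeds the corresponding threshold, and hence so does the whole sum for $n$ sufficiently large.

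For the inductive step $c\ge 2$, I would write $F$ as a retract of an object $F'$ fitting in an exact triangle $F_1\xra{u}F'\xra{v}F_{c-1}\to\Si F_1$ with $F_1\in\thickn{1}{G}$ and $F_{c-1}\in\thickn{c-1}{G}$, as permitted by the recursive definition of $\thickn{c}{G}$. Given $\alpha\in\Hom_\sfT^n(F',K_c)$, the base case applied to $F_1$ and the single morphism $\theta_c$ yields $\theta_c\alpha u=0$ for $n\gg0$. Applying $\Hom_\sfT^*(-,K_{c-1})$ to the triangle and using the resulting long exact sequence, $\theta_c\alpha$ therefore factors as $\eta v$ for some $\eta\in\Hom_\sfT^n(F_{c-1},K_{c-1})$. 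The inductive hypothesis applied to $F_{c-1}$ and the shorter sequence $\theta_1,\dots,\theta_{c-1}$ then gives $(\theta_1\cdots\theta_{c-1})\eta=0$ for $n\gg0$, and hence
\[
(\theta_1\cdots\theta_c)\alpha \;=\; (\theta_1\cdots\theta_{c-1})(\theta_c\alpha) \;=\; (\theta_1\cdots\theta_{c-1})\eta\, v \;=\; 0.
\]
Since $F$ is a retract of $F'$, the same conclusion holds with $F$ in place of $F'$, which closes the induction.

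The main bookkeeping concern is that each invocation of condition (1) and of the inductive hypothesis produces its own bound for ``$n\gg0$''. These bounds depend on the shifts appearing in the presentation of $F_1$ and on $\theta_1,\dots,\theta_c$, but not on $\alpha$, so taking the maximum of finitely many such bounds yields a single uniform threshold that works for all morphisms $\alpha$.
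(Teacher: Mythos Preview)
Your argument is correct and is essentially the standard proof of the Ghost Lemma; the induction, the factoring step through the triangle $F_1\to F'\to F_{c-1}\to\Si F_1$, and your handling of the uniform thresholds are all sound. Note, however, that the paper does not supply its own proof here: the lemma is stated with a \texttt{\textbackslash qed} and attributed to \cite[Lemma~2.1]{Pb} (with several further references), so there is no in-paper argument to compare against. Your write-up would serve as a self-contained justification of exactly the kind those references provide.
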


There is also a contravariant version of the Ghost Lemma, involving $\Hom_{\sfT}^{*}(-,G)$.

\begin{theorem}
\label{thm:koszul}
Let $\sfT$ be a triangulated category and $R$ a graded-commutative ring acting centrally on it.  Let $X,Y$ be objects in $\sfT$ with the property that the $R$-module $\Hom^{*}_{\sfT}(X,Y)$ is in $\flnoeth(R)$.

For any $c<\dim_{R}\Hom^{*}_{\sfT}(X,Y)$ there exist elements $r_{1},\dots,r_{c}$ in $(R^{\even})^{+}$ with
\[
\kos X{\bsr} \not\in \thickn cX\quad\text{and}\quad
\kos Y{\bsr} \not\in \thickn cY\,.
\]
\end{theorem}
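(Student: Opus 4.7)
The plan is to construct $r_1,\dots,r_c$ by induction, choosing at each step an element $r_i\in(R^{\even})^+$ that is simultaneously filter-regular on the two graded modules $\Hom^*_{\sfT}(X,Y_{i-1})$ and $\Hom^*_{\sfT}(X_{i-1},Y)$, where $X_i=\kos{X_{i-1}}{r_i}$ and $Y_i=\kos{Y_{i-1}}{r_i}$. Iterating the exact sequence~\eqref{eq:koszul-les} and its analogue obtained from applying $\Hom^*_{\sfT}(X,-)$ to the Koszul triangle on the target shows inductively that both of these modules lie in $\flnoeth(R)$, since $M=\Hom^*_{\sfT}(X,Y)\in\flnoeth(R)$ and $\flnoeth(R)$ is an abelian subcategory. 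By Lemma~\ref{lem:reven} they also lie in $\flnoeth(R^{\even})$, and Proposition~\ref{prop:evn} then provides finite sets of associated primes in $\proj R^{\even}$. Prime avoidance applied to the union of these two finite sets, exactly as in the proof of Lemma~\ref{lem:filter}, supplies the required $r_i$.

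To prove $\kos X{\bsr}\not\in\thickn c X$, I would apply the Ghost Lemma (Lemma~\ref{lem:ghosts}) with $F=X_c$, $G=X$, and tower $K_i=\Sigma^{-i}Y_i$ whose connecting morphisms $\theta_i\col K_i\to K_{i-1}$ are induced by the Koszul connecting maps $\delta_i^Y\col Y_i\to\Sigma Y_{i-1}$. The long exact sequence of $\Hom^*_{\sfT}(X,-)$ applied to the Koszul triangle for $Y_i$ places the image of $\Hom^n_{\sfT}(X,\theta_i)$ inside a shift of $(0:r_i)_{\Hom^*_{\sfT}(X,Y_{i-1})}$, which is eventually zero by filter-regularity; this gives condition~(1). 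For condition~(2), the central $R$-action makes the Koszul construction functorial: for each $f\in M^m$, iterated fill-in along the tower produces $\iota(f)\col X_c\to\Sigma^mY_c$ satisfying the naturality relation $\delta^Y\circ\iota(f)=\pm\Sigma^cf\circ\delta^X$ in $\Hom^{m+c}_{\sfT}(X_c,Y)$, where $\delta^X\col X_c\to\Sigma^cX$ and $\delta^Y\col Y_c\to\Sigma^cY$ denote the total connecting maps. Iterating~\eqref{eq:koszul-les}, now using filter-regularity of $r_i$ on $\Hom^*_{\sfT}(X_{i-1},Y)$, identifies $\Hom^*_{\sfT}(X_c,Y)$ with $M/(\bsr)M$ up to eventually-zero parts and a shift, and under this identification $f\mapsto\Sigma^cf\circ\delta^X$ becomes the canonical projection $M\to M/(\bsr)M$. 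Since $c<\dim_RM$, Proposition~\ref{prop:flevn}(1) forces $M/(\bsr)M$ to be non-zero in infinitely many degrees, which supplies condition~(2).

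The second assertion $\kos Y{\bsr}\not\in\thickn c Y$ is proved by the same strategy using the contravariant version of the Ghost Lemma noted after Lemma~\ref{lem:ghosts}, now with tower built from the inclusions $\eta_i^X\col\Sigma^{|r_i|}X_{i-1}\to X_i$ appearing as the second morphism in the Koszul triangle for $X_i$. The corresponding ghost condition again reduces to filter-regularity of $r_i$ on $\Hom^*_{\sfT}(X_{i-1},Y)$, and the dual naturality $\iota(f)\circ\eta^X=\pm\eta^Y\circ f$, combined with the identification of $\Hom^*_{\sfT}(X,Y_c)$ with $M/(\bsr)M$ via iterated~\eqref{eq:koszul-les} and filter-regularity of $r_i$ on $\Hom^*_{\sfT}(X,Y_{i-1})$, provides condition~(2). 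The main obstacle throughout is the construction of the functorial lift $\iota\col M\to\Hom^*_{\sfT}(X_c,Y_c)$: at each level of the Koszul tower one must choose a fill-in morphism by the morphism-of-triangles axiom and verify the naturality relations up to the signs imposed by centrality, while carefully tracking the degree shifts introduced by the Koszul construction.
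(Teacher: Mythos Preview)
Your approach is correct in outline, but it takes a different and considerably more laborious route than the paper. The paper's key simplification is this: rather than building the tower out of $Y_i=\kos{Y_{i-1}}{r_i}$, it sets $s=2^c$ and uses $K_i=\Sigma^{-i}\bigl(\kos{Y}{\{r_1^s,\dots,r_i^s\}}\bigr)$, choosing each $r_i$ filter-regular on the module attached to $K_{i-1}$ (so that condition~(1) of the Ghost Lemma still holds). The point is that by Lemma~\ref{lem:koszul} the element $r_i^s$ annihilates $\Hom^*_{\sfT}(\kos X{\bsr},K_{i-1})$, so the map $\Hom^*_{\sfT}(\kos X{\bsr},\theta_i)$ is \emph{surjective} outright. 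One then only needs to know that $\Hom^*_{\sfT}(\kos X{\bsr},Y)$ is not eventually zero, which follows from the support identity $\msup\Hom^*_{\sfT}(\kos X{\bsr},Y)=\mcV(\bsr)\cap\msup M$ together with Propositions~\ref{prop:evn} and~\ref{prop:flevn}(1). Surjectivity at every stage plus non-vanishing at the bottom immediately gives condition~(2); no lifts $\iota(f)$ are ever constructed.

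Your argument instead manufactures the witnesses for condition~(2) by hand, lifting each $f\in M$ through iterated TR3 fill-ins and then identifying the image of $M$ in $\Hom^*_{\sfT}(X_c,Y)$ with $M/(\bsr)M$ in high degrees. This works, but it forces you to track non-canonical choices, signs, and degree shifts across $c$ levels, and to argue carefully that the eventually-zero pieces do not obstruct the identification of the image at each step (e.g.\ checking that $(M/r_1M)\cap r_2\Hom^*_{\sfT}(X_1,Y)=r_2(M/r_1M)$ in high degrees). The paper's device of passing to $s$th powers trades all of that bookkeeping for a single invocation of Lemma~\ref{lem:koszul}. What your approach buys is that the same sequence $r_1,\dots,r_c$ serves on both the $X$- and $Y$-sides without any auxiliary powers, and it makes more transparent the link between the composite ghost map and the projection $M\to M/(\bsr)M$; but the price is the fill-in construction you flag as the main obstacle.
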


\begin{remark}
In the language of levels, introduced in \cite[\S2.3]{ABIM}, the conclusion of the preceding theorem reads:
\[
\level X{\kos X{\bsr}}> c \quad\text{and}\quad \level Y{\kos Y{\bsr}}> c\,.
\]
This formulation is sometimes more convenient to use in arguments.
\end{remark}

\begin{proof} 
The plan is to apply the Ghost Lemma. 

By Lemma~\ref{lem:reven} one can assume that $R=R^{\even}$, and in particular that the graded ring $R$ is commutative. The $R$-module $\Hom^{*}_{\sfT}(X,Y)$ is in $\flnoeth(R)$ and hence so are $\Hom^{*}_{\sfT}(X,\kos Y{\bsx})$ and $\Hom^{*}_{\sfT}(\kos X{\bsx},Y)$, for any finite sequence $\bsx$ of elements in $R$; this
can be checked using \eqref{eq:koszul-les} and an induction on the length of $\bsx$. 

Set $s=2^{c}$. Using the observation in the previous paragraph and Lemma~\ref{lem:filter}, one can find, by iteration, elements $r_{1},\dots,r_{c}$ in $R^{+}$ such that for $i=1,\dots,c$ the element $r_{i}$ is filter-regular on the $R$-module
\[
\Hom^{*}_{\sfT}(X,\kos Y{\{r_{1}^{s},\dots,r_{i-1}^{s}\}}) \oplus 
\Hom^{*}_{\sfT}(\kos X{\{r_{1}^{s},\dots,r_{i-1}^{s}\}},Y)\,
\]
Equivalently, the element $r_{i}$ is filter-regular on each of the direct summands above.

We now verify that $\kos X{\bsr}$ is not in $\thickn cX$. 

Set $K_{0}=Y$, set $K_{i} = \Si^{-i}\big(\kos Y{\{r_{1}^{s},\dots,r_{i}^{s}\}}\big)$ for $i=1,\dots,c$, and let
\begin{equation}
\label{eq:proof}
K_{i}\xlto{\theta_{i}}K_{i-1}\xra{\pm r_{i}^{s}} \Si^{s|r_{i}|}K_{i-1}\to \Si K_{i}\,,
\end{equation}
be the exact triangle obtained (by suitable suspension) from the one in \eqref{eq:koszul}.

We claim that for each $i=1,\dots,c$ the following properties hold:
\begin{enumerate}[\quad\rm(1)]
\item $\Hom_{\sfT}^{n}(X,\theta_{i})=0$ for $n\gg 0$;
\item $\Hom_{\sfT}^{*}(\kos X{\bsr},\theta_{i})$ is surjective;
\item $\Hom_{\sfT}^{n}(\kos X{\bsr},K_{0})\ne 0$ for infinitely many $n\ge 0$.
\end{enumerate}

Indeed, for each $W\in \sfT$ the triangle \eqref{eq:proof} induces  an exact sequence 
\[
\Hom^{*}_{\sfT}(W,K_{i}) \xra{\Hom^{*}_{\sfT}(W,\theta_{i})} \Hom^{*}_{\sfT}(W,K_{i-1}) 
   \xra{\pm r_{i}^{s}} \Hom^{*}_{\sfT}(W,K_{i-1})[s|r_{i}|]
\]
of graded $R$-modules.

(1) With $W=X$ in the sequence above, $r_{i}$ is filter-regular on $\Hom^{*}_{\sfT}(X,K_{i-1})$, by choice, and hence so is $r_{i}^{s}$. This proves the claim. 

(2) Set $W=\kos X{\bsr}$ in the exact sequence above, and note that $r_{i}^{s}$ annihilates  $\Hom^{*}_{\sfT}(\kos X{\bsr},K_{i-1})$, by Lemma~\ref{lem:koszul}.  

(3) Recall that $K_{0}=Y$. It suffices to prove that one has an equality
\[
\msup \Hom_{\sfT}^{*}(\kos X{\bsr},Y)= \mcV(\bsr)\cap\msup \Hom_{\sfT}^{*}(X,Y)\,.
\]
For then the choice of $c$ ensures that the set above is non-empty, by Proposition~\ref{prop:flevn}(1), and hence $\Hom_{\sfT}^{*}(\kos X{\bsr},Y)$ is not eventually zero, by Proposition~\ref{prop:evn}.

The equality above can be established as in the proof of \cite[Proposition 3.10]{AI}: By induction on the length of the sequence $\bsr$, it suffices to consider the case where $\bsr=r$. Setting $M=\Hom^{*}_{\sfT}(X,Y)$, it follows from \eqref{eq:koszul-les} that one has an equality
\[
\msup \Hom_{\sfT}^{*}(\kos X{r},Y) = \msup (M/rM) \cup \msup (0:r)_{M}\,.
\]
It then remains to note that one has
\[
\msup (M/rM) = \msup M \cap \mcV(r)\quad\text{and}\quad \msup (0:r)_{M}\subseteq \msup M\cap \mcV(r)\,,
\]
where the equality holds because one has $M/rM= M\otimes_{R}R/Rr$, while the inclusion holds because $(0:r)_{M}$ is a submodule of $M$ annihilated by $r$.

This justifies claims (1)--(3) above. 

Observe that (2) and (3) imply that $\Hom_{\sfT}^{*}(\kos X{\bsr},\theta_{1}\cdots\theta_{c})$ is not eventually zero. Therefore, the Ghost Lemma yields $\kos X{\bsr}\not\in \thickn cX$, as desired.

A similar argument, employing the contravariant version of the Ghost Lemma, establishes that $\kos Y{\bsr}$ is not in $\thickn cY$.
\end{proof}

\section{The dimension of a triangulated category}
\label{Dimension of a triangulated category}

The \emph{dimension} of a triangulated category $\sfT$ is the number
\[
\dim \sfT = \inf \{n\in\bbn \mid \text{there exists a $G\in\sfT$ with $\thickn {n+1}G=\sfT$}\}.
\]
Evidently, if $\dim \sfT$ is finite there exists an object $G$ with $\Thick_{\sfT}(G)=\sfT$; we call such an object $G$ a \emph{generator} for $\sfT$. The dimension of $\sfT$ can be infinite even if it has a
generator.

\begin{lemma}
\label{lem:dimsym}
Let $\sfT$ be a triangulated category and $R$ a graded-commutative ring acting centrally on it. If
$G$ is a generator for $\sfT$, then for each object $X$ in $\sfT$ one has equalities
\[
\msup[{R^{\even}}]\Hom^{*}_{\sfT}(X,G) = \msup[{R^{\even}}] \End^{*}_{\sfT}(X)
 = \msup[{R^{\even}}] \Hom^{*}_{\sfT}(G,X)\,.
\]
\end{lemma}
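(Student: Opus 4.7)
The plan is to show the two-sided equality by verifying that for each homogeneous prime $\fp\in\proj R^{\even}$ the three localizations $\Hom^{*}_{\sfT}(X,G)_{\fp}$, $\End^{*}_{\sfT}(X)_{\fp}$, and $\Hom^{*}_{\sfT}(G,X)_{\fp}$ are simultaneously zero or nonzero. The argument splits into a ``thick subcategory'' half and a ``centrality'' half, with $R$ replaced by $R^{\even}$ throughout via Lemma~\ref{lem:reven}.

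First I would fix $\fp$ and consider the two classes
\[
\mcC_{\fp}=\{Y\in\sfT \mid \Hom^{*}_{\sfT}(X,Y)_{\fp}=0\}\,,\qquad
\mcC'_{\fp}=\{Y\in\sfT \mid \Hom^{*}_{\sfT}(Y,X)_{\fp}=0\}\,.
\]
Since localization at $\fp$ is exact and the functors $\Hom^{*}_{\sfT}(X,-)$ and $\Hom^{*}_{\sfT}(-,X)$ are cohomological and compatible with direct summands and suspensions, both $\mcC_{\fp}$ and $\mcC'_{\fp}$ are thick subcategories of $\sfT$. From this I immediately get one inclusion in each direction: if $\Hom^{*}_{\sfT}(X,G)_{\fp}=0$ then $G\in\mcC_{\fp}$, so $\mcC_{\fp}\supseteq\Thick_{\sfT}(G)=\sfT$, which forces $X\in\mcC_{\fp}$ and hence $\End^{*}_{\sfT}(X)_{\fp}=0$; the symmetric statement using $\mcC'_{\fp}$ handles $\Hom^{*}_{\sfT}(G,X)$.

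The remaining (and more interesting) implication is that $\End^{*}_{\sfT}(X)_{\fp}=0$ forces both $\Hom^{*}_{\sfT}(X,G)_{\fp}=0$ and $\Hom^{*}_{\sfT}(G,X)_{\fp}=0$. The key observation is that the identity $\id_{X}$ lies in $\End^{*}_{\sfT}(X)$, so its vanishing in the localization produces a homogeneous element $s\in R^{\even}\setminus\fp$ with $s\cdot\id_{X}=\phi_{X}(s)=0$. The centrality axiom then gives, for every $Y\in\sfT$ and every $f\in\Hom^{*}_{\sfT}(X,Y)$,
\[
\phi_{Y}(s)\circ f = \pm f\circ\phi_{X}(s) = 0\,,
\]
and analogously $\phi_{X}(s)\circ g = \pm g\circ\phi_{Y}(s)=0$ for $g\in\Hom^{*}_{\sfT}(Y,X)$. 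Thus $s$ annihilates both $\Hom^{*}_{\sfT}(X,Y)$ and $\Hom^{*}_{\sfT}(Y,X)$ for every $Y$; inverting $s$ (which is legal since $s\notin\fp$) kills these modules after localization, and specializing to $Y=G$ finishes the proof.

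The argument is essentially formal; the only place that requires care is the appeal to centrality to upgrade ``$s$ kills $\id_{X}$'' to ``$s$ kills every Hom-group involving $X$'', and making sure $s$ can be chosen homogeneous and of even degree so that it lies outside $\fp\subset R^{\even}$. I do not anticipate a substantive obstacle.
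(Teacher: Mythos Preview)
Your argument is correct and is essentially the paper's proof: the thick-subcategory step is identical, and your ``find $s\notin\fp$ with $\phi_X(s)=0$'' is exactly what the paper compresses into the single remark that $R$ acts on $\Hom^{*}_{\sfT}(X,G)$ via the ring homomorphism $R\to\End^{*}_{\sfT}(X)$. The only difference is that you unpack the module-over-$\End^{*}_{\sfT}(X)$ statement explicitly.
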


\begin{proof}
We may assume $R=R^{\even}$. Using the fact that localization is an exact functor, it is easy verify that for any subset $\mcU$ of $\spec R$ the subcategory 
\[
\{Y\in\sfT\mid \msup\Hom^{*}_{\sfT}(X,Y)\subseteq \mcU\}
\]
of $\sfT$ is thick. Since $X$ is in $\Thick_{\sfT}(G)$, one thus obtains an inclusion
\[
\msup\End^{*}_{\sfT}(X) \subseteq \msup \Hom^{*}_{\sfT}(X,G)\,.
\]
The reverse inclusion holds because $R$ acts on $\Hom^{*}_{\sfT}(X,G)$ via a homomorphism of rings $R\to\End^{*}_{\sfT}(X)$. This settles the first equality.

A similar argument gives the second one.
\end{proof}

\begin{theorem}
\label{thm:main}

Let $\sfT$ be a triangulated category and $R$ a graded-commutative ring acting centrally on it. If an object $X\in\sfT$ is such that the $R$-module $\Hom^{*}_{\sfT}(X,G)$, or $\Hom^{*}_{\sfT}(G,X)$, is in
$\flnoeth(R)$, for some generator $G$, then one has an inequality
\[
\dim \sfT\geq \dim_{R} \End^{*}_{\sfT}(X) - 1\,.
\]
\end{theorem}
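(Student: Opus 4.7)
The plan is to deduce the inequality from Theorem~\ref{thm:koszul} applied to $X$ and a carefully enlarged generator of $\sfT$. By symmetry, assume $\Hom^{*}_{\sfT}(X,G)\in\flnoeth(R)$; the other case is parallel, with $\Hom^{*}_{\sfT}(-,X)$ replacing $\Hom^{*}_{\sfT}(X,-)$ throughout. First one should check that $\End^{*}_{\sfT}(X)$ itself lies in $\flnoeth(R)$, so that $d:=\dim_{R}\End^{*}_{\sfT}(X)$ is well-defined. Since $X\in\Thick_{\sfT}(G)$, one applies $\Hom^{*}_{\sfT}(X,-)$ to the exact triangles and retractions that build $X$ out of shifts of $G$; the resulting long exact sequences of $R$-modules, together with the closure of $\flnoeth(R)$ under extensions, suspensions, and direct summands, yield $\End^{*}_{\sfT}(X)\in\flnoeth(R)$. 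The bound is vacuous when $d=0$, so assume $d\ges 1$.

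Set $n=\dim\sfT$ and fix a generator $G_{0}$ of $\sfT$ with $\sfT=\thickn{n+1}{G_{0}}$. Instead of applying Theorem~\ref{thm:koszul} to $(X,G_{0})$ directly, I would replace $G_{0}$ by
\[
G'\;:=\;X\oplus G_{0},
\]
which is again a generator. Because $G_{0}$ is a retract of $G'$, one has $\thickn{m}{G_{0}}\subseteq\thickn{m}{G'}$ for every $m\ges 1$, and so $\sfT=\thickn{n+1}{G'}$. The same thick-subcategory argument as before, now applied to $G_{0}\in\Thick_{\sfT}(G)$, shows that $\Hom^{*}_{\sfT}(X,G_{0})\in\flnoeth(R)$; hence
\[
\Hom^{*}_{\sfT}(X,G')=\End^{*}_{\sfT}(X)\oplus\Hom^{*}_{\sfT}(X,G_{0})
\]
lies in $\flnoeth(R)$, and its dimension is at least $d$ because $\End^{*}_{\sfT}(X)$ appears as a direct summand.

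Next I would apply Theorem~\ref{thm:koszul} to the pair $(X,G')$ with $c=d-1<d\leq\dim_{R}\Hom^{*}_{\sfT}(X,G')$: it produces elements $\bsr=r_{1},\dots,r_{d-1}$ in $(R^{\even})^{+}$ such that $\kos{G'}{\bsr}\notin\thickn{d-1}{G'}$. But $\kos{G'}{\bsr}$ lies in $\sfT=\thickn{n+1}{G'}$, so $\thickn{d-1}{G'}\subsetneq\thickn{n+1}{G'}$; this forces $d-1<n+1$, i.e., $\dim\sfT=n\ges d-1$, which is the asserted inequality.

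The main subtlety the argument must address is that the hypothesis controls $\Hom^{*}_{\sfT}(X,G)$ only for the single given generator $G$, whereas $\dim\sfT$ quantifies over every generator of $\sfT$. Comparing $\dim_{R}\Hom^{*}_{\sfT}(X,G_{0})$ with $\dim_{R}\End^{*}_{\sfT}(X)$ for a generic $G_{0}$ would otherwise require a support-theoretic analysis of the type suggested by Lemma~\ref{lem:dimsym}; the summand trick $G'=X\oplus G_{0}$ sidesteps that comparison by producing a module whose dimension automatically dominates $d$.
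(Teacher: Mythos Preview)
Your argument is correct and complete (modulo the harmless omission of the case $\dim\sfT=\infty$, where the inequality is trivial).  It differs from the paper's proof in exactly the place you identify.  The paper, after observing that $\{Y\in\sfT\mid\Hom^{*}_{\sfT}(X,Y)\in\flnoeth(R)\}$ is thick and hence all of $\sfT$, takes an \emph{arbitrary} generator $G$ realizing $\dim\sfT$, applies Theorem~\ref{thm:koszul} to $(X,G)$, and then invokes Lemma~\ref{lem:dimsym} to obtain the equality $\dim_{R}\Hom^{*}_{\sfT}(X,G)=\dim_{R}\End^{*}_{\sfT}(X)$ via equality of supports in $\proj R^{\even}$.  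Your direct-summand trick $G'=X\oplus G_{0}$ sidesteps Lemma~\ref{lem:dimsym} entirely: since $\End^{*}_{\sfT}(X)$ sits as a summand in $\Hom^{*}_{\sfT}(X,G')$, the inequality $\dim_{R}\Hom^{*}_{\sfT}(X,G')\geq d$ is immediate from additivity of length and the definition of complexity.  The gain is a more self-contained argument that avoids the support comparison; the paper's route, on the other hand, yields the stronger statement that $\msup[R^{\even}]\Hom^{*}_{\sfT}(X,G)$ is the same for \emph{every} generator $G$, which is of independent interest and is used elsewhere (e.g., in the proof of Proposition~\ref{prop:fingengor}).
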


\begin{proof} Suppose that the $R$-module $\Hom^{*}_{\sfT}(X,G)$ is in $\flnoeth(R)$. The full subcategory of $\sfT$ with objects
\[
\{Y\in \sfT\mid \Hom^{*}_{\sfT}(X,Y)\in \flnoeth(R)\}
\]
is thick. Since it contains $G$ it coincides with $\sfT$, so one may assume that $G$ is an arbitrary generator for $\sfT$. For $c=\dim_{R}\Hom^{*}_{\sfT}(X,G)-1$, Theorem~\ref{thm:koszul} yields a Koszul object, $\kos G{\bsr}$, not contained in $\thickn{c}G$. This implies the inequality below:
\[
\dim \sfT\geq \dim_{R} \Hom^{*}_{\sfT}(X,G) -1 = \dim_{R} \End^{*}_{\sfT}(X) -1 \,;
\]
the  equality is by Lemma~\ref{lem:dimsym}. The other case is handled in the same way.
\end{proof}

In the theorem, the number $\dim_{R}\End^{*}_{\sfT}(X)$ is independent of the ring $R$, in a sense explained in the following lemma. These results together justify Theorem~\ref{ithm:intro}.

\begin{lemma}
\label{lem:independence}
Let $\sfT$ be an additive $\bbz$-category and let $X,Y$ be objects in $\sfT$. Suppose that there are graded-commutative rings $R$ and $S$ acting centrally on $\sfT$ such that $\Hom^{*}_{\sfT}(X,Y)$ is in both $\flnoeth(R)$ and $\flnoeth(S)$. One then has an equality
\[
\dim_{R}\Hom^{*}_{\sfT}(X,Y)=\dim_{S}\Hom^{*}_{\sfT}(X,Y)\,.
\]
\end{lemma}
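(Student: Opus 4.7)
The approach is to introduce a single graded-commutative ring $T$ through which both the $R$-action and the $S$-action factor, and then to reduce the assertion to two applications of Proposition~\ref{prop:dimbasechange}.

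\emph{Step 1: Construct $T$.} Let $\phi_{R}\col R\to \cent^{*}(\sfT)$ and $\phi_{S}\col S\to \cent^{*}(\sfT)$ be the structure homomorphisms. Let $T$ be the graded subring of $\cent^{*}(\sfT)$ generated by $\phi_{R}(R)\cup\phi_{S}(S)$. Since $\cent^{*}(\sfT)$ is graded-commutative, any two elements of $T$ graded-commute, so $T$ itself is a graded-commutative ring. The inclusion $T\hookrightarrow \cent^{*}(\sfT)$ makes $T$ act centrally on $\sfT$, and by construction the homomorphisms $\phi_{R}$ and $\phi_{S}$ factor through graded ring homomorphisms $R\to T$ and $S\to T$.

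\emph{Step 2: Compare $R$ and $T$.} View $M=\Hom^{*}_{\sfT}(X,Y)$ as a $T$-module via the inclusion $T\hookrightarrow\cent^{*}(\sfT)$. Because $R\to T\hookrightarrow\cent^{*}(\sfT)$ equals $\phi_{R}$, the original $R$-module structure on $M$ coincides with the restriction of scalars of the $T$-module structure along $R\to T$. By hypothesis, $M\in\flnoeth(R)$, so Proposition~\ref{prop:dimbasechange}, applied to the homomorphism $R\to T$, yields
\[
\dim_{R}\Hom^{*}_{\sfT}(X,Y)=\dim_{T}\Hom^{*}_{\sfT}(X,Y)\,.
\]
The symmetric argument applied to $S\to T$ (using the hypothesis $M\in\flnoeth(S)$) gives
\[
\dim_{S}\Hom^{*}_{\sfT}(X,Y)=\dim_{T}\Hom^{*}_{\sfT}(X,Y)\,.
\]
Combining these equalities completes the proof.

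\emph{Main obstacle.} There is no substantial obstacle once one recognises that $\cent^{*}(\sfT)$ is itself graded-commutative, so that the images of $R$ and $S$ together generate a graded-commutative subring $T$; the rest is bookkeeping. The only point requiring a moment's thought is the compatibility of the module structures, which is immediate from the factorisations $R\to T\hookrightarrow\cent^{*}(\sfT)$ and $S\to T\hookrightarrow\cent^{*}(\sfT)$ and the definition of a central action. Note that it is \emph{not} necessary to verify separately that $M\in\flnoeth(T)$, since Proposition~\ref{prop:dimbasechange} only requires $M$ to be eventually noetherian and of eventually finite $0$-th-degree length over the source ring of the homomorphism.
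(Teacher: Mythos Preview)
Your argument is correct and follows the same overall strategy as the paper: construct a single graded-commutative ring through which both actions factor, and then apply Proposition~\ref{prop:dimbasechange} twice. The only difference is in the choice of intermediate ring: you take the graded subring $T$ of $\cent^{*}(\sfT)$ generated by the images of $R$ and $S$, whereas the paper uses the graded tensor product $R\otimes_{\bbz}S$, equipped with the obvious homomorphisms $R\to R\otimes_{\bbz}S\leftarrow S$ and the induced central action on $\sfT$. Both choices work for the same reason (Proposition~\ref{prop:dimbasechange} only needs the module to lie in $\flnoeth$ over the source ring), so the difference is cosmetic. One small advantage of the tensor-product construction is that it sidesteps any set-theoretic worries about $\cent^{*}(\sfT)$ when $\sfT$ is not essentially small; your approach implicitly relies on the paper's convention that $\cent^{*}(\sfT)$ is a bona fide graded ring.
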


\begin{proof}
\pushQED{\qed} %
Indeed, the graded tensor product $R\otimes_{\bbz}S$ is a graded-commutative ring, and one has natural homomorphisms of graded rings $R\to R\otimes_{\bbz}S\gets S$. The central actions of $R$ and $S$ on $\sfT$ extend to one of the ring $R\otimes_{\bbz}S$, and $\Hom^{*}_{\sfT}(X,Y)$ is in $\flnoeth(R\otimes_{\bbz}S)$.
Proposition~\ref{prop:dimbasechange}, applied to the preceding homomorphisms, now yields equalities
\[
\dim_{R}\Hom^{*}_{\sfT}(X,Y)=\dim_{R\otimes_{\bbz}S}\Hom^{*}_{\sfT}(X,Y)=\dim_{S}\Hom^{*}_{\sfT}(X,Y)\,.
\qedhere
\]
\end{proof}

\begin{remark}
\label{rem:flnoeth}
The preceding result suggests that one should consider the full subcategory of objects $X$ in $\sfT$ with the property that, for some ring $R$ acting centrally on $\sfT$ and all $Y\in \sfT$, one has $\Hom^{*}_{\sfT}(X,Y)\in \flnoeth(R)$; let us denote it $\flnoeth(\sfT)$. Arguing as in the proof of Lemma~\ref{lem:independence}, it is not difficult to prove that $\flnoeth(\sfT)$ is precisely the subcategory $\flnoeth(\cent^{\ges 0}(\sfT))$, where $\cent^{\ges0}(\sfT)$ is the non-negative part of the graded center of $\sfT$. This implies, for instance, that $\flnoeth(\sfT)$ is a thick subcategory of $\sfT$, and also that one has an `intrinsic' notion of dimension for objects in this subcategory. Thus, one could state the main results of this section
without involving an `external' ring $R$. In practice, however there are usually more convenient choices than $\cent^{\ges 0}(\sfT)$, for a ring $R$ acting centrally on $\sfT$.
\end{remark}

\subsection*{Cohomological functors}
There are also versions of Theorem~\ref{thm:main} which apply to cohomological functors. In order to explain this, let $\sfT$ be a triangulated category and $H\col\sfT\to \abel$ a cohomological functor to the category of abelian groups.  Let $R$ be a graded-commutative ring that acts centrally on $\sfT$. The graded abelian group
\[
H^{*}(Y)=\bigoplus_{n\in\bbz} H(\Si^{n}Y)
\]
then has a natural structure of a graded $R$-module.  

Assume that there exists a generator $G$ of $\sfT$ such that the $R$-module $H^{*}(G)$ is noetherian and the $R^{0}$-module $H^{i}(G)$ has finite length for each $i$. One can check, as in Lemma~\ref{lem:independence}, that in this case, for any $Y\in \sfT$, the dimension of the $R$-module $H^{*}(Y)$ is finite and independent
of $R$; denote it $\dim H^{*}(Y)$.

\begin{theorem}
\label{thm:rep}
Let $\sfT$ be a triangulated category, and assume that idempotents in $\sfT$ split.  If $H$ is a cohomological functor and $G$ a generator of $\sfT$ such that the $R$-module $H^{*}(G)$ is noetherian and the $R^{0}$-module $H^{i}(G)$ has finite length for each $i$, then one has an inequality:
\[
\dim \sfT\geq \dim H^{*}(Y)-1\quad\text{for each $Y\in\sfT$}.
\]
\end{theorem}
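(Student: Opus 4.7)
The plan is to mimic the proof of Theorem~\ref{thm:main}, with the cohomological functor $H$ replacing the representable functor $\Hom^{*}_{\sfT}(X,-)$ at every step, and appealing to a cohomological version of the Ghost Lemma at the end. First I would extend the finiteness hypothesis: the full subcategory of $\sfT$ whose objects $Y$ satisfy $H^{*}(Y)\in\flnoeth(R)$ is thick. Closure under shifts is immediate, closure under extensions follows from the cohomological long exact sequence of $H$ together with the abelianness of $\flnoeth(R)$, and closure under direct summands uses the hypothesis that idempotents split in $\sfT$. Since the subcategory contains $G$ and $G$ generates $\sfT$, it equals $\sfT$. Hence $\dim H^{*}(Y)$ is a well-defined finite number for every $Y$, and the argument of Lemma~\ref{lem:independence} shows that this number is independent of the choice of $R$.

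Next, fix $Y\in\sfT$, set $d=\dim H^{*}(Y)$ and $c=d-1$; one may assume $c\geq 0$. The aim is to show $\dim\sfT\geq c$, i.e., for each generator $G'$ of $\sfT$ to exhibit an object outside $\thickn{c}{G'}$. By Lemma~\ref{lem:reven} assume $R$ is commutative, and set $s=2^{c}$. Iterating Lemma~\ref{lem:filter}, pick $r_{1},\dots,r_{c}\in R^{+}$ with $r_{i}$ filter-regular on $H^{*}(K_{i-1})$, where $K_{0}=Y$ and $K_{i}=\Si^{-i}\kos{Y}{r_{1}^{s},\dots,r_{i}^{s}}$; take the morphisms $\theta_{i}\col K_{i}\to K_{i-1}$ from the Koszul triangles \eqref{eq:proof}. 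The arguments of claims (1) and (3) in the proof of Theorem~\ref{thm:koszul}, with $H$ in place of $\Hom^{*}_{\sfT}(X,-)$, then yield $H^{n}(\theta_{i})=0$ for $n\gg 0$ and
\[
\msup H^{*}(\kos{Y}{r_{1}^{s},\dots,r_{c}^{s}}) = \mcV(r_{1}^{s},\dots,r_{c}^{s})\cap\msup H^{*}(Y),
\]
a non-empty set by Proposition~\ref{prop:flevn}(1) since $c<d$. In particular $H^{*}(K_{c})$ is not eventually zero, so $H^{n}(\theta_{1}\cdots\theta_{c})\neq 0$ for infinitely many $n$.

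What remains --- and where the main obstacle lies --- is to deduce $K_{c}\notin\thickn{c}{G'}$ from this non-vanishing. Lemma~\ref{lem:ghosts} is formulated for representable test functors $\Hom^{*}_{\sfT}(F,-)$ with some object $F$, whereas we only have non-vanishing of the cohomological functor $H$. The split-idempotents hypothesis is what closes this gap: via a Freyd-style embedding of $\sfT$ into the abelian category of coherent additive functors on $\sfT^{\op}$ every cohomological functor becomes representable, so one can apply Lemma~\ref{lem:ghosts} in that ambient category with $H$ playing the role of $\Hom^{*}(F,-)$ and transfer the resulting non-membership back to $\sfT$; equivalently one can re-run the short inductive proof of Lemma~\ref{lem:ghosts} directly with $H$ as the test functor. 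Either way one obtains $K_{c}\notin\thickn{c}{G'}$, and since $G'$ was an arbitrary generator this gives $\dim\sfT\geq c = \dim H^{*}(Y) - 1$.
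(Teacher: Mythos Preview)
There is a genuine gap in your application of the Ghost Lemma. In Lemma~\ref{lem:ghosts} the roles of $F$ and $G$ are asymmetric: to conclude $F\notin\thickn{c}{G}$ one needs the $\theta_i$ to be $G$-ghosts, i.e., $\Hom^n_\sfT(G,\theta_i)=0$ for $n\gg0$; the functor $\Hom_\sfT(F,-)$ enters only to detect that the composite $\theta_1\cdots\theta_c$ is nonzero. Your $\theta_i$ satisfy $H^n(\theta_i)=0$ for $n\gg0$, an ``$H$-ghost'' condition that says nothing about $\Hom^*_\sfT(G',-)$ for an arbitrary generator $G'$, so there is no route from your data to $K_c\notin\thickn{c}{G'}$. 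The Freyd embedding does not close the gap: it realizes $H$ as a representable functor on an abelian category $A(\sfT)$ in which $\sfT$ sits, but $A(\sfT)$ is not triangulated and the representing object need not lie in $\sfT$, so Lemma~\ref{lem:ghosts} does not apply there in any way that bounds $\dim\sfT$. Re-running the inductive proof of the Ghost Lemma with $H$ as the detecting functor likewise still requires the $\theta_i$ to be $G'$-ghosts; that hypothesis is where the induction starts, and you have not verified it.

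The paper's proof takes a different route that uses the hypotheses more sharply. The assumptions that idempotents split in $\sfT$ and that $H^*(G)$ is noetherian with each $H^i(G)$ of finite $R^0$-length---note: noetherian and of finite length in \emph{every} degree, a condition strictly stronger than membership in $\flnoeth(R)$---are precisely what is needed for a Bondal--Van den Bergh style representability argument, yielding $H\cong\Hom_\sfT(X,-)$ for an object $X$ of $\sfT$ itself. The inequality then follows at once from Theorem~\ref{thm:main}. Your direct approach with an unrepresented $H$ is closer in spirit to Theorem~\ref{thm:algebraic}, but there the hypothesis of functorial mapping cones is what makes it possible to form Koszul objects of the \emph{functor} $H$ and to obtain natural transformations of functors playing the role of the $\theta_i$; that device is exactly what is missing here.
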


\begin{proof}[Sketch of a proof]
Under the hypotheses of the theorem, the functor $H$ is representable; this can be proved by an argument similar to that for \cite[Theorem~1.3]{BV} due to Bondal and Van den Bergh. The result is thus contained in Theorem~\ref{thm:main}.
\end{proof}

The following result is a variation on Theorems~\ref{thm:main} and \ref{thm:rep} which might be useful in some contexts.  The hypothesis on $\sfT$ holds, for example, when it is algebraic, in the sense of Keller~\cite{Kel}.

\begin{theorem}
\label{thm:algebraic}
Let $\sfT$ be a triangulated category with functorial mapping cones. If $H$ is a cohomological functor and $G$ a generator for $\sfT$ such that $H^{*}(G)$ is in $\flnoeth(R)$, for some ring $R$ acting centrally on $\sfT$, then one has an inequality:
\[
\dim \sfT\geq \dim H^{*}(Y)-1\quad\text{for each $Y\in\sfT$}.
\]
\end{theorem}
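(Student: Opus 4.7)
The plan is to follow the blueprint of the proof of Theorem~\ref{thm:main}, using the cohomological functor $H$ in place of the representable $\Hom^{*}_{\sfT}(X,-)$. The hypothesis of functorial mapping cones is needed to compensate for the absence of a representing object of $H$, taking over the role that the Bondal--Van den Bergh representability plays in the proof sketched for Theorem~\ref{thm:rep}.

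First I would extend the finiteness of $H^{*}(G)$ from the single generator to all of $\sfT$: since $H$ is cohomological and $\flnoeth(R)$ is an abelian subcategory closed under extensions, the full subcategory $\{Z\in\sfT\mid H^{*}(Z)\in\flnoeth(R)\}$ is thick and contains $G$, so equals $\sfT$; in particular $H^{*}(Y)\in\flnoeth(R)$ and $\dim H^{*}(Y)$ is finite. Next, fix any generator $G$, set $c=\dim H^{*}(Y)-1$, and (invoking Lemma~\ref{lem:reven}) assume $R$ is commutative. Mimicking the proof of Theorem~\ref{thm:koszul}, with $s=2^{c}$, I would iteratively apply Lemma~\ref{lem:filter} to produce elements $r_{1},\dots,r_{c}\in R^{+}$ such that each $r_{i}$ is filter-regular on $H^{*}\bigl(\kos{Y}{\{r_{1}^{s},\dots,r_{i-1}^{s}\}}\bigr)$. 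Functorial mapping cones are used to render the Koszul tower $K_{i}=\Si^{-i}\kos{Y}{\{r_{1}^{s},\dots,r_{i}^{s}\}}$, with its connecting morphisms $\theta_{i}\col K_{i}\to K_{i-1}$, canonically defined, and to provide the $H$-analogue of the sequence~\eqref{eq:koszul-les}. From these ingredients one obtains $H^{n}(\theta_{i})=0$ for $n\gg 0$ and each $i$, while the support computation
\[
\msup H^{*}(\kos{Y}{\bsr})=\mcV(\bsr)\cap\msup H^{*}(Y)\ne\emptyset,
\]
which follows from Propositions~\ref{prop:flevn}(1) and~\ref{prop:evn} together with the inequality $c<\dim H^{*}(Y)$, forces $H^{n}(\theta_{1}\cdots\theta_{c})$ to be nonzero for infinitely many $n$.

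The main obstacle, and the essential use of the functorial-cones hypothesis, is a cohomological variant of the Ghost Lemma~\ref{lem:ghosts}: under that hypothesis the two properties above should imply $\kos{Y}{\bsr}\not\in\thickn{c}{G}$. In the proof sketch for Theorem~\ref{thm:rep} this step is avoided by invoking \cite{BV} to represent $H$ and then appealing to the classical Ghost Lemma, but under the present weaker finiteness representability may fail. The hypothesis of functorial mapping cones is what enables a direct Ghost-style induction on $c$ to be carried out at the level of $H$ (in the algebraic case, via passage to a dg enhancement, this is essentially immediate). Once the cohomological Ghost Lemma is in hand, $\thickn{c}{G}\ne\sfT$, whence $\dim\sfT\ge c=\dim H^{*}(Y)-1$, as required.
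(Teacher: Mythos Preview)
Your plan has a genuine gap, stemming from a conflation of the two distinct functors in the proof of Theorem~\ref{thm:koszul}. There, $\Hom^{*}_{\sfT}(X,-)$ supplies the ghost condition~(1), while a \emph{second} functor $\Hom^{*}_{\sfT}(\kos X{\bsr},-)$ supplies the non-vanishing in~(2) and~(3); surjectivity in~(2) holds precisely because $r_{i}^{s}$ annihilates $\Hom^{*}_{\sfT}(\kos X{\bsr},-)$. You ask the single functor $H$ to play both parts, and the outcome is inconsistent: once $H^{n}(\theta_{i})=0$ for $n\gg 0$, the composite $H^{n}(\theta_{1}\cdots\theta_{c})$ is automatically zero for $n\gg 0$ as well. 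Your support computation yields only that $H^{*}(K_{c})$ is not eventually zero; this says nothing about the \emph{map} $H(\theta_{1}\cdots\theta_{c})$, because there is no surjectivity step available for $H$. And even were your two conditions compatible, neither involves $G$ or $\Hom^{*}_{\sfT}(G,-)$, so no Ghost-type statement can extract $\kos Y{\bsr}\notin\thickn cG$ from them.

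The paper's argument is organised quite differently, and this is where functorial cones actually enter. They are not needed to make the tower $K_{i}$ on a fixed object canonical; rather, they turn $Z\mapsto\kos Zr$ into an exact endofunctor of $\sfT$, so that each $H_{i}=H\bigl(\Si^{-1}\kos{(-)}{\{r_{1}^{s},\dots,r_{i}^{s}\}}\bigr)$ is again a cohomological functor and the connecting maps assemble into \emph{natural transformations} $\theta_{r_{i}}\colon H_{i}\to H_{i-1}$. One then chooses each $r_{i}$ filter-regular on $H_{i-1}^{*}(G)$ (not on $H^{*}$ of Koszul objects on $Y$), so that $\theta_{r_{i}}(G)$ is eventually zero; the test object is $\kos G{\bsr}$, on which each $\theta_{r_{i}}$ is surjective (since $r_{i}^{s}$ kills $H_{i-1}^{*}(\kos G{\bsr})$) and $H^{*}(\kos G{\bsr})$ is not eventually zero. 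An analogue of Lemma~\ref{lem:ghosts} for natural transformations of cohomological functors then gives $\kos G{\bsr}\notin\thickn cG$ with $c=\dim H^{*}(G)-1$.
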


\begin{proof}[Sketch of a proof]
Since $\sfT$ has functorial mapping cones, for each $r\in R$, the construction of the Koszul object $\kos Yr$ can be made functorial. Thus the assignment $Y\mapsto \kos Yr$ defines an exact functor on $\sfT$, and therefore the assignment $Y\mapsto H(\Si^{-1}\kos Yr)$ yields a cohomological functor; let us denote it $\kos Hr$, with a caveat that it is a desuspension of what is introduced in \eqref{eq:koszul}. This functor comes equipped with a natural transformation $\theta_{r}\col \kos Hr\to H$.

Let $G$ be a generator for $\sfT$, set $c=\dim H^{*}(G)-1$ and $s=2^{c}$. Arguing as in the proof of Theorem~\ref{thm:koszul}, one can pick a sequence of elements $r_{1},\dots,r_{c}$ such that $r_{i+1}$ is filter-regular on $H_{i}^{*}(G)$, where $H_{0}=H$ and $H_{i}=\kos {H_{i-1}}{r_{i}^{s}}$ for $i\ge 1$.  One thus has natural transformations
\[
H_{c}\xra{\theta_{r_{c}}}H_{c-1}\xra{\theta_{r_{c-1}}}\cdots \xra{\theta_{r_{1}}} H_{0}
\]
satisfying, for each $i=1,\dots,c$, the following conditions:
\begin{enumerate}[{\quad\rm(1)}]
\item $\theta_{r_{i}}^{*}(G)\col H_{i}^{*}(G)\to H_{i-1}^{*}(G)$ is eventually zero;
\item $\theta_{r_{i}}^{*}(\kos G{\bsr})\col H_{i}^{*}(\kos G{\bsr})\to H_{i-1}^{*}(\kos G{\bsr})$ is surjective;
\item $H^{*}(\kos G{\bsr})$ is not eventually zero.
\end{enumerate}
It now follows from (an analogue of) the Ghost Lemma that  $\kos G{\bsr}$ is not in $\thickn c{G}$.
This implies the desired result.
\end{proof}

\section{Applications}
\label{Applications}

Let $A$ be a noetherian ring. In what follows, $\sfD^{b}(A)$ denotes the bounded derived category of finitely generated $A$-modules, with the usual structure of a triangulated category.  Following Buchweitz~\cite{Bu}, the \emph{stable derived category} of $A$ is the category
\[
\std(A) = \sfD^{b}(A)/\sfD^\per(A)\,,
\]
where $\sfD^\per(A)=\Thick(A)$ denotes the category of perfect complexes.  Here the quotient is taken in the sense of Verdier; see~\cite{V}. It has a structure of a triangulated category, for which the canonical functor $\sfD^{b}(A)\to \std(A)$ is exact.

\begin{remark}
\label{rem:central}
The quotient functor $\sfD^{b}(A)\to \std(A)$ induces a homomorphism of graded rings $Z^*(\sfD^{b}(A))\to Z^*(\std(A))$. Thus a central action of a graded commutative ring $R$ on $\sfD^b(A)$ induces a central action on $\std(A)$. In particular, for any pair of complexes $X,Y\in\sfD^b(A)$ the natural map
\[
\Ext^*_A(X,Y)=\Hom_{\sfD^{b}(A)}^{*}(X,Y)\to \Hom_{\std}^{*}(X,Y)
\]
is one of $R$-modules.
\end{remark}

\subsection*{Gorenstein rings}
A noetherian ring $A$ is called \emph{Gorenstein} if $A$ is of finite injective dimension both as a left module and a right module over itself. In the commutative case, this is more restrictive than the usual definition of a Gorenstein ring; however both definitions coincide if $A$ has finite Krull dimension.

The following result is \cite[Corollary~6.3.4]{Bu}.

\begin{lemma}
\label{lem:evbij}
Let $A$ be a noetherian Gorenstein ring. Then for each pair of complexes $X,Y\in\sfD^b(A)$ the  natural map
\[
\Hom_{\sfD^b(A)}^{n}(X,Y)\to \Hom_{\std(A)}^{n}(X,Y)
\]
induced by the quotient functor $\sfD^{b}(A)\to \std(A)$ is bijective for $n\gg 0$.\qed
\end{lemma}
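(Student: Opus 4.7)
The plan is to interpret the Verdier quotient $\sfD^b(A)\to\std(A)$ via complete (Tate) projective resolutions, which exist for every finitely generated module over a Gorenstein ring. As a first step, by d\'evissage---filtering $X$ and $Y$ through their cohomology truncations and invoking the long exact sequences of $\Hom$ induced by the triangulated structures of $\sfD^b(A)$ and of $\std(A)$---I would reduce to the case when $X$ and $Y$ are finitely generated $A$-modules concentrated in cohomological degree zero.

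Next I would construct the complete resolutions. Set $d=\id_A A=\id_{A^\op}A$, finite by the Gorenstein hypothesis. For any finitely generated $A$-module $N$ the $d$-th syzygy $\Omega^d N$ is Gorenstein projective, and hence fits into a totally acyclic complex of finitely generated projectives. Splicing with an ordinary projective resolution $P\to N$ yields a $\bbz$-graded acyclic complex $T$ of finitely generated projectives, with $\Hom_A(T,Q)$ acyclic for every projective $Q$, together with a comparison map $P\to T$ that is an isomorphism in all cohomological degrees $\le -d$. Its mapping cone is a bounded complex of projectives, hence perfect, so $T$ becomes canonically isomorphic to $N$ in $\std(A)$.

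I would then invoke Buchweitz's equivalence \cite{Bu} between $\std(A)$ and the homotopy category of totally acyclic complexes of finitely generated projective $A$-modules. Under this identification
\[
\Hom_{\std(A)}^n(X,Y)\;\cong\;\Hom_{\bfK(\mathrm{Proj}\,A)}^n(T_X,T_Y),
\]
while $\Ext_A^n(X,Y)\cong\Hom_{\bfK(\mathrm{Proj}\,A)}^n(P_X,P_Y)$ in terms of ordinary projective resolutions $P_X,P_Y$. The map of the lemma is the one induced by the comparisons $P_X\to T_X$ and $P_Y\to T_Y$ on each side. For $n>d$, a chain map of degree $n$ between such resolutions is determined, up to homotopy, by its components in cohomological degrees $\le -d$, a range in which the complete and ordinary resolutions coincide. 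This forces the comparison to be bijective for all sufficiently large $n$, which is exactly the claim.

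The principal obstacle is the structural input from \cite{Bu}: the functorial construction of complete resolutions (including a functorial resolution of $X$ on the source side of $\Hom$) and the identification of $\std(A)$ with the homotopy category of totally acyclic complexes. Granting those ingredients, the high-degree bijectivity is a transparent bookkeeping argument about the supports of chain maps.
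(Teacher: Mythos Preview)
The paper does not actually prove this lemma: it simply records it as \cite[Corollary~6.3.4]{Bu} and closes with a \qed. So there is no argument in the paper to compare against.

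Your proposal is a faithful outline of the proof in Buchweitz's manuscript, and it is correct at that level. Two small remarks. First, the d\'evissage to modules is fine but not strictly necessary: once one knows that the cones of the comparison maps $P_X\to T_X$ and $P_Y\to T_Y$ are perfect complexes of bounded amplitude, the long exact sequences associated with these triangles in the homotopy category give the bijectivity of
\[
\Hom_{\bfK}^n(P_X,P_Y)\lto \Hom_{\bfK}^n(T_X,T_Y)
\]
for $n$ exceeding a bound depending only on those amplitudes, with no reduction needed. Second, your phrase ``a chain map of degree $n$ \dots\ is determined, up to homotopy, by its components in cohomological degrees $\le -d$'' is the right intuition but is not quite an argument; the rigorous version is precisely the cone-amplitude computation just mentioned, which shows that $\Hom^n$ from or into a perfect complex of bounded amplitude vanishes once $n$ is large relative to that amplitude and to the boundedness of the other argument. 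With that adjustment your sketch is complete and matches the source the paper cites.
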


The notion of complexity of a sequence of non-negative integers was recalled in the paragraph preceding Proposition~\ref{prop:flevn}.  We define the complexity of a pair $X,Y$ of complexes of $A$-modules to be the number
\[
\cx_{A}(X,Y) = \cx(\length_{Z(A)}(\Ext^{n}_{A}(X,Y)))
\]
where $Z(A)$ denotes the center of $A$.

\begin{example}
Let $A$ be an Artin $k$-algebra, and let $\fr$ denote its radical. Then every finitely generated $A$-module $M$ admits a minimal projective resolution
\[
\cdots \to P_2\to P_1\to P_0\to M\to 0
\]
and one defines the complexity of $M$ as
\[
\cx_{A}(M) = \cx(\length_{k}(P_n))\,.
\]
It is well-known that $\cx_A(M)=\cx_A(M,A/\fr)$; see \cite[A.13]{Av} or \cite[\S5.3]{BensonII} for details.
\end{example}

Recall that a ring $A$ is said to be a \emph{noetherian algebra} if there exists a commutative noetherian ring $k$ such $A$ is a $k$-algebra and a finitely generated $k$-module.

\begin{theorem}
\label{thm:goren}
Let $A$ be a noetherian algebra which is Gorenstein.  Let $X\in\sfD^{b}(A)$ be such that $\Ext_A^*(X,Y)$ is in $\flnoeth(R)$, for some graded-commutative ring $R$ acting centrally on $\sfD^b(A)$, and for all $Y\in\sfD^b(A)$.  One then has inequalities
\[
\dim \sfD^{b}(A) \geq \dim \std(A) \geq\cx_{A}(X,X)-1\,.
\]
\end{theorem}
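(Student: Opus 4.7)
The plan is to prove the two inequalities separately. For $\dim \sfD^{b}(A) \geq \dim \std(A)$, the Verdier quotient functor $\sfD^{b}(A) \to \std(A)$ is exact and essentially surjective on objects, so any generator $G$ of $\sfD^{b}(A)$ satisfying $\thickn{n+1}{G} = \sfD^{b}(A)$ has image $\bar{G}$ satisfying $\thickn{n+1}{\bar{G}} = \std(A)$; taking infima yields the inequality.

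For the second inequality, the plan is to apply Theorem~\ref{thm:main} to $\sfT = \std(A)$. If $\std(A)$ admits no generator then $\dim \std(A) = \infty$ and the inequality is vacuous, so assume a generator exists; by essential surjectivity of the quotient functor, pick a generator $G'$ of $\std(A)$ which is the image of some $G \in \sfD^{b}(A)$. By Remark~\ref{rem:central}, $R$ acts centrally on $\std(A)$. Lemma~\ref{lem:evbij} (using the Gorenstein hypothesis) gives $\Hom_{\std(A)}^{n}(X,G') = \Ext_{A}^{n}(X,G)$ for $n \gg 0$; together with $\Ext_{A}^{*}(X,G) \in \flnoeth(R)$ and the fact that membership in $\flnoeth$ is a condition on tails, this yields $\Hom_{\std(A)}^{*}(X,G') \in \flnoeth(R)$. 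Theorem~\ref{thm:main} then gives
\[
\dim \std(A) \geq \dim_{R} \End_{\std(A)}^{*}(X) - 1 \,.
\]
A further application of Lemma~\ref{lem:evbij} shows $\End_{\std(A)}^{*}(X)$ and $\Ext_{A}^{*}(X,X)$ agree in high degrees, so by Proposition~\ref{prop:flevn}(2) (after reducing to $R^{\even}$ via Lemma~\ref{lem:reven}),
\[
\dim_{R} \End_{\std(A)}^{*}(X) \;=\; \dim_{R} \Ext_{A}^{*}(X,X) \;=\; \cx\bigl(\length_{R^{0}}(\Ext_{A}^{n}(X,X))\bigr)\,.
\]

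It remains to identify this complexity with $\cx_{A}(X,X) = \cx(\length_{Z(A)}(\Ext_{A}^{n}(X,X)))$. The plan is to combine $R$ with the natural central action of $Z(A)$ on $\sfD^{b}(A)$ by scalar multiplication, obtaining a central action of $S = R \otimes_{\bbz} Z(A)$; one first verifies that $\Ext_{A}^{*}(X,X) \in \flnoeth(S)$, and then applies Proposition~\ref{prop:dimbasechange} to the homomorphisms $R \to S \leftarrow Z(A)$ to collapse the three dimensions into one. The step I expect to be the main obstacle is this last reconciliation: while $R$-noetherianness transports up to $S$ cheaply, one must ensure $\Ext_{A}^{*}(X,X)$ also lies in $\flnoeth(Z(A))$ so that Proposition~\ref{prop:dimbasechange} applies symmetrically. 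This should follow from the noetherian algebra structure of $A$ by choosing a commutative noetherian $k$ with $k \to Z(A)$ making $A$ finite over $k$, and comparing $k$-, $R^{0}$-, and $Z(A)$-lengths modulo finite constant factors.
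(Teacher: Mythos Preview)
Your argument is the paper's: quotient to $\std(A)$, transport the $\flnoeth$ hypothesis through Remark~\ref{rem:central} and Lemma~\ref{lem:evbij}, apply Theorem~\ref{thm:main}, and then match $\dim_R\End^*(X)$ with $\cx_A(X,X)$ via Propositions~\ref{prop:flevn} and~\ref{prop:dimbasechange}. One correction on the last step: you cannot have $\Ext^*_A(X,X)\in\flnoeth(Z(A))$ unless it is eventually zero, because $Z(A)$ is concentrated in degree~$0$ and an eventually noetherian module over a degree-$0$ ring is eventually bounded---so Proposition~\ref{prop:dimbasechange} does not apply to $Z(A)\to S$. The repair is the one you already sketch as a fallback: apply Proposition~\ref{prop:dimbasechange} only along $R\to S=R\otimes_{\bbz} Z(A)$, and then compare $S^0$-lengths with $Z(A)$-lengths directly, using that each $\Ext^n_A(X,X)$ is finitely generated over $Z(A)$ (so every simple $\bar S^0$-composition factor, being a field and a $Z(A)$-subquotient of $\Ext^n_A(X,X)$, has finite $Z(A)$-length); this is precisely what the paper's closing sentence invokes.
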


\begin{proof}
The inequality on the left holds because $\std(A)$ is a quotient of $\sfD^{b}(A)$. The $R$-action on $\sfD^b(A)$ induces an action on $\std(A)$ by Remark~\ref{rem:central}, and the finiteness condition on $X$ as an
object of $\sfD^b(A)$ passes to the stable category $\std(A)$ because of Lemma~\ref{lem:evbij}. In particular, Lemma~\ref{lem:evbij} implies the equality
\[
\dim_R\End^*_{\sfD^b(A)}(X)=\dim_R\End^*_{\std(A)}(X)\,.
\]
Theorem~\ref{ithm:intro} now yields the inequality below
\[
\dim\std(A) \geq \dim_R \End^{*}_{\sfD^b(A)}(X)-1 = \cx_{A}(X,X)-1\,.
\]
The equality follows from Propositions~\ref{prop:flevn} and \ref{prop:dimbasechange}, where we use that $\Ext_A^n(X,X)$ is finitely generated over $Z(A)$.
\end{proof}

\subsection*{Artin algebras}
An \emph{Artin algebra} is a noetherian $k$-algebra where the ring $k$ is artinian; equivalently the center $Z(A)$ of $A$ is an artinian commutative ring and $A$ is finitely generated as a module over it. Over such rings the various finiteness conditions considered in this article coincide.

\begin{lemma}
Let $A$ be an Artin algebra and $X,Y$ objects in $\sfD^{b}(A)$. If the graded module $\Ext^{*}_{A}(X,Y)$ is in $\flnoeth(R)$ for some ring $R$ acting centrally on $\sfD^{b}(A)$, then it is noetherian and degreewise of finite length over the ring $R\otimes_{\bbz}Z(A)$.
\end{lemma}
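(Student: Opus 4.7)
The plan is to verify three things in sequence: that $R\otimes_{\bbz}Z(A)$ actually acts centrally on $\sfD^{b}(A)$ so the statement makes sense, that each graded component of $\Ext^{*}_{A}(X,Y)$ is of finite length over $(R\otimes_{\bbz}Z(A))^{0}$, and that the whole module is genuinely noetherian (not merely eventually noetherian) over $R\otimes_{\bbz}Z(A)$.

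First I would observe that $Z(A)$ acts on $\sfD^{b}(A)$ in degree $0$, via the obvious map $Z(A)\to\cent^{0}(\sfD^{b}(A))$ sending $z$ to multiplication by $z$ on each complex. Because this action lives in degree $0$, it commutes strictly (hence in the graded sense) with the image of $R$ in $\cent^{*}(\sfD^{b}(A))$, so the two actions assemble into a single homomorphism of graded-commutative rings $R\otimes_{\bbz}Z(A)\to\cent^{*}(\sfD^{b}(A))$. The degree zero part of this enlarged ring is exactly $R^{0}\otimes_{\bbz}Z(A)$.

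Next I would establish the degreewise length bound. Write $A$ as a module-finite algebra over a commutative artinian ring $k$. The cohomologies $H^{p}(X)$ and $H^{q}(Y)$ are then finitely generated over $A$ and hence of finite length over $k$. The hyperext spectral sequence, or equivalently induction on the lengths of $X$ and $Y$ using the triangles associated to naive truncations, exhibits $\Ext^{i}_{A}(X,Y)$ as a subquotient of finitely many groups of the form $\Ext^{j}_{A}(H^{p}X,H^{q}Y)$. Each such group is a finitely generated $k$-module (being the cohomology of a complex of finitely generated $k$-modules), and since $k\subseteq Z(A)$ is artinian, each $\Ext^{i}_{A}(X,Y)$ is of finite length over $Z(A)$ for \emph{every} $i\in\bbz$ -- and a fortiori over $R^{0}\otimes_{\bbz}Z(A)$.

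For the noetherianity, set $M=\Ext^{*}_{A}(X,Y)$ and use the hypothesis to pick $n$ with $M^{\ges n}$ noetherian over $R$. The key observation is that every graded $R\otimes_{\bbz}Z(A)$-submodule of $M^{\ges n}$ is in particular an $R$-submodule, so ascending chains of $R\otimes_{\bbz}Z(A)$-submodules stabilize; hence $M^{\ges n}$ is noetherian over $R\otimes_{\bbz}Z(A)$. The quotient $M/M^{\ges n}=\bigoplus_{i<n}\Ext^{i}_{A}(X,Y)$ has only finitely many nonzero graded components, each of finite length over $Z(A)$ by the preceding step, so it is a noetherian $Z(A)$-module and therefore also a noetherian $R\otimes_{\bbz}Z(A)$-module. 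Closure of noetherianity under extensions then yields that $M$ itself is noetherian over $R\otimes_{\bbz}Z(A)$.

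The only genuinely subtle point is the first sentence of the last paragraph: one must notice that enlarging the coefficient ring can only shrink the collection of admissible submodules, which is what allows the passage from ``eventually noetherian over $R$'' to ``honestly noetherian over $R\otimes_{\bbz}Z(A)$''. Everything else is a standard finiteness argument for Artin algebras.
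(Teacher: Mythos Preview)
Your proof is correct and follows essentially the same route as the paper, which simply records the key observation that each $\Ext^{i}_{A}(X,Y)$ has finite length over $Z(A)$ and asserts that the conclusion follows. You have spelled out in full what the paper leaves implicit: the construction of the $R\otimes_{\bbz}Z(A)$-action, the finite-length statement via truncation/spectral sequence, and the passage from ``eventually noetherian over $R$'' to ``noetherian over $R\otimes_{\bbz}Z(A)$'' by combining the submodule-shrinking observation for $M^{\ges n}$ with the finiteness of $M/M^{\ges n}$.
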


\begin{proof}
It is easy to check that the $Z(A)$-module $\Ext^{i}_{A}(X,Y)$ has finite length for each $i$. The desired result is a consequence of this observation.
\end{proof}

For Artin algebras we are able to establish a stronger version of Theorem~\ref{thm:goren}, where one does not have to assume beforehand that the ring is Gorenstein. This is based on the following observation.

\begin{proposition}
\label{prop:fingengor}
Let $A$ be an Artin algebra with radical $\fr$. If $\Ext^*_A(A/\fr,A/\fr)$ is noetherian over some graded-commutative ring acting centrally on $\sfD^b(A)$, then $A$ is Gorenstein.
\end{proposition}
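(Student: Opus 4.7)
The strategy is to prove that $A$ has finite injective dimension as a left $A$-module; the standard Matlis-type duality $D$ for Artin algebras then converts the hypothesis into its exact analogue for $A^{\op}$, and the same argument run on the right yields the Gorenstein property.

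I would first upgrade the hypothesis from the single object $A/\fr$ to every object of $\sfD^b(A)$. Since $A$ is an Artin algebra, every finitely generated $A$-module has a composition series whose factors are summands of $A/\fr$, and every object of $\sfD^b(A)$ is built from its cohomology modules in finitely many triangles; hence $\sfD^b(A)=\Thick(A/\fr)$. The full subcategory $\{Y\in\sfD^b(A)\mid\Ext^*_A(A/\fr,Y)\ \text{is noetherian over}\ R\}$ is thick (noetherian $R$-modules form an abelian subcategory closed under extensions) and contains $A/\fr$ by assumption, so it equals $\sfD^b(A)$. In particular, $\Ext^*_A(A/\fr,A)$ is noetherian over $R$.

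The crucial observation is that $R^{+}$ annihilates this module. The ring homomorphism $\phi_A\colon R\to \End^*_{\sfD^b(A)}(A)=\bigoplus_n\Ext^n_A(A,A)$ lands in the degree-zero component $A$, because the higher $\Ext^n_A(A,A)$ vanish; hence $\phi_A(R^{+})=0$, and by the central sign rule $\phi_A(r)\circ\eta=\pm\,\eta\circ\phi_{A/\fr}(r)$ the $R^{+}$-action on $\Ext^*_A(A/\fr,A)$ is zero. Thus $\Ext^*_A(A/\fr,A)$ is a noetherian $R$-module killed by $R^{+}$, equivalently a finitely generated graded module over $R^{0}$; being concentrated in a single degree, $R^{0}$ can only support finitely generated graded modules that live in finitely many degrees. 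Consequently $\Ext^i_A(A/\fr,A)=0$ for $i\gg 0$, and the standard composition-length induction (using that there are only finitely many isomorphism classes of simple left $A$-modules, all summands of $A/\fr$) upgrades this to a uniform vanishing $\Ext^i_A(M,A)=0$ for $i\gg 0$ and every $M\in\mod A$, i.e.\ $A$ has finite injective dimension as a left module.

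The point I expect to require care is the transfer to $A^{\op}$. One needs that $D$ furnishes an anti-equivalence $\sfD^b(A)\to\sfD^b(A^{\op})^{\op}$ together with a compatible isomorphism of graded centres, so that transporting $R$ via $D$ yields a central action on $\sfD^b(A^{\op})$ under which $\Ext^*_A(A/\fr,A/\fr)$ is identified $R$-linearly with $\Ext^*_{A^{\op}}(A^{\op}/\fr^{\op},A^{\op}/\fr^{\op})$. Granted this, the preceding argument applied to $A^{\op}$ supplies the right-sided finite injective dimension, and $A$ is Gorenstein.
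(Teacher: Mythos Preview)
Your argument is correct, but it takes a different route from the paper's.  The key difference is in how one reaches ``finite injective dimension on the left''.  You use the observation that $\phi_A\colon R\to\End^*_{\sfD^b(A)}(A)$ vanishes on $R^{+}$ (since $\Ext^{>0}_A(A,A)=0$), so $R^{+}$ annihilates $\Ext^*_A(A/\fr,A)$; a noetherian $R$-module killed by $R^{+}$ is finitely generated over $R^{0}$ and hence bounded above.  The paper instead proves, via the support symmetry of Lemma~\ref{lem:dimsym} and Proposition~\ref{prop:evn}, the stronger intermediate statement that \emph{every} $A$-module $X$ has finite projective dimension if and only if it has finite injective dimension.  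Your direct annihilation argument is more elementary and avoids the support machinery.  On the other hand, the paper's stronger conclusion pays off in the passage to the right side: once ``$\operatorname{pd}X<\infty\iff\operatorname{id}X<\infty$'' is known for all left modules, one simply applies it to $X=DA$ (which is injective, hence of finite injective dimension) to obtain $\operatorname{pd}_A(DA)<\infty$, whence $\operatorname{id}(A_A)=\operatorname{pd}_A(DA)<\infty$.  This sidesteps precisely the point you flagged as delicate, namely transporting the central $R$-action along $D$ to $\sfD^b(A^{\op})$.  Your transfer can be carried out, but the paper's route avoids it altogether.
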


\begin{proof}
Observe that $G=A/\fr$ is a generator for $\sfD^b(A)$. Thus, an $A$-module $X$ has finite injective dimension if and only if $\Ext_A^*(G,X)$ is eventually zero, and $X$ has finite projective dimension if and only if $\Ext_A^*(X,G)$ is eventually zero. Moreover, when $\Ext_A^*(G,G)$ is eventually noetherian over some ring $R$ acting centrally on $\sfT$, then so are $\Ext_{A}^{*}(X,G)$ and $\Ext_{A}^{*}(G,X)$. In view of Lemma~\ref{lem:reven}, one may assume that $R=R^{\even}$, so Lemma~\ref{lem:dimsym} yields an equality
\[
\msup \Ext^{*}_A(X,G) = \msup \Ext^{*}_{A}(G,X)\,.
\]
Applying Proposition~\ref{prop:evn}, it follows that $X$ has finite projective dimension if and only if it   has finite injective dimension. The duality between right and left modules then implies that $A$ is Gorenstein.
\end{proof}

Recall that the Loewy length of an Artin algebra $A$, with radical $\fr$, is the least non-negative integer $n$ such that $\fr^n=0$; we denote it $\loewy(A)$.

\begin{corollary}
\label{cor:fd}
Let $A$ be an Artin algebra with radical $\fr$.  If $\Ext^{*}_A(A/\fr,A/\fr)$ is noetherian as a module over some ring acting centrally on $\sfD^{b}(A)$, then
\[
\loewy(A)\geq \dim \sfD^{b}(A) \geq \dim \std(A) \geq \cx_A(A/\fr)-1\,. 
\]
\end{corollary}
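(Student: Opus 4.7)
The corollary comprises three inequalities, which I would establish from left to right; the middle one $\dim \sfD^{b}(A)\geq \dim \std(A)$ is formal, since $\std(A)=\sfD^{b}(A)/\sfD^{\per}(A)$ is a Verdier quotient, so any $G\in \sfD^{b}(A)$ with $\thickn{n+1}{G}=\sfD^{b}(A)$ descends under the quotient functor to a generator of $\std(A)$ attaining the same bound.

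For the leftmost inequality $\loewy(A)\geq \dim \sfD^{b}(A)$, my plan is to prove the stronger statement $\thickn{n}{A/\fr}=\sfD^{b}(A)$, where $n=\loewy(A)$. Given $X\in \sfD^{b}(A)$, I would represent it by a bounded complex of finitely generated $A$-modules and consider the decreasing filtration by the subcomplexes $\fr^{i}X$ defined termwise, $(\fr^{i}X)^{j}=\fr^{i}\cdot X^{j}$. Because $\fr^{n}=0$, the short exact sequences of complexes $0\to\fr^{i+1}X\to\fr^{i}X\to\fr^{i}X/\fr^{i+1}X\to 0$ give $n$ exact triangles in $\sfD^{b}(A)$ whose cones are bounded complexes of $A/\fr$-modules. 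Since $A$ is Artin, $A/\fr$ is semisimple, so each such cone splits in the derived category as a direct sum of shifts of its cohomology, and each cohomology module is a finite direct sum of simple $A$-modules; therefore every cone lies in $\thickn{1}{A/\fr}$. Iterating the triangles upward from $\fr^{n}X=0$ places $X$ in $\thickn{n}{A/\fr}$, as claimed.

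For the rightmost inequality, the lemma preceding Proposition~\ref{prop:fingengor} shows that, after replacing $R$ by $R\otimes_{\bbz}Z(A)$ if necessary, one may assume $\Ext_A^*(A/\fr,A/\fr)\in\flnoeth(R)$; this uses that every $\Ext^{i}$ is already of finite length over the artinian ring $Z(A)$. Proposition~\ref{prop:fingengor} then guarantees that $A$ is Gorenstein, so Theorem~\ref{thm:goren} applies with $X=A/\fr$, provided the hypothesis $\Ext^*_A(A/\fr,Y)\in\flnoeth(R)$ is verified for every $Y\in \sfD^{b}(A)$. The subcategory of such $Y$ is thick (as in the proof of Theorem~\ref{thm:main}) and contains $A/\fr$, which by the previous paragraph generates $\sfD^{b}(A)$ as a thick subcategory, so the hypothesis is automatic. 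Theorem~\ref{thm:goren} then yields $\dim\std(A)\geq \cx_A(A/\fr,A/\fr)-1$, and the identity $\cx_A(A/\fr,A/\fr)=\cx_A(A/\fr)$ from the example preceding the theorem completes the bound. I do not anticipate any serious obstacle: the Loewy filtration of a complex is the one genuinely new ingredient, while the rest assembles results already established.
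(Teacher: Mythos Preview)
Your argument is correct and follows essentially the same route as the paper: the paper cites Rouquier's \cite[Lemma~7.35]{Rq:dim} for $\thickn{\loewy(A)}{A/\fr}=\sfD^{b}(A)$, which is precisely your Loewy-filtration argument, and then invokes Proposition~\ref{prop:fingengor} together with Theorem~\ref{thm:goren} for the remaining inequalities. Your additional remarks---passing to $R\otimes_{\bbz}Z(A)$ to secure the finite-length condition, and the thick-subcategory argument showing $\Ext^{*}_{A}(A/\fr,Y)\in\flnoeth(R)$ for all $Y$---make explicit what the paper leaves to the reader in ``combining'' those two results.
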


\begin{proof}
The first inequality holds because $\thickn[{}]{\loewy(A)}{A/\fr}=\sfD^{b}(A)$; see \cite[Lemma 7.35]{Rq:dim}. The rest are obtained by combining Proposition~\ref{prop:fingengor} and Theorem~\ref{thm:goren}.
\end{proof}

\begin{remark}
In view of results of Friedlander and Suslin~\cite{FS}, the preceding result applies, in particular, to the case when $A$ is a co-commutative Hopf algebra over a field $k$. In this case, the $k$-algebra $\Ext_A^*(k,k)$ acts on $\sfD^{b}(A)$ via the diagonal action.

One may specialize further to the case where $k$ is a field of characteristic $p$ and $A=kG$ is the group algebra of a finite group $G$. It follows from a theorem of Quillen~\cite{Qu} that $\cx_{kG}(k)$ equals the $p$-rank of $G$. Thus, Corollary~\ref{cor:fd} yields the following inequalities
\[
\loewy(kG)\geq \dim \std(kG) \geq \rank_{p}(G)-1\,.
\]
These estimates were first obtained in \cite{Op} using different methods.
\end{remark}

\begin{remark}
We should like to note that when $A$ is an Artin $k$-algebra which is also projective as a $k$-module, one has a natural first choice for the ring acting centrally on $\sfD^{b}(A)$, namely, the Hochschild cohomology $\HH^*(A)$ of $A$ over $k$.

Suppose that $A$ is finite dimensional over an algebraically closed field $k$.  In \cite[\S2]{EHTSS}, Erdmann et al.\ introduced the following finiteness condition: There is a noetherian graded subalgebra $H$ of $\HH^*(A)$, with $H^0=\HH^0(A)$, such that $\Ext^*_A(A/\fr,A/\fr)$ is finitely generated over $H$. This condition
has been investigated by various authors, in particular, in connection with the theory of support varieties.

The present work and \cite{AI} suggest that $\flnoeth(\bfD^b(A))=\bfD^b(A)$ is the appropriate finiteness condition on $A$; see Remark~\ref{rem:flnoeth}. In particular, the ring $R$ acting centrally on $\bfD^b(A)$ is not essential, and the emphasis shifts rather to properties of $\Ext^*_A(A/\fr,A/\fr)$ alone. While this point of view is more general, is seems also to be technically simpler and more flexible.
\end{remark}

\subsection*{Complete intersections}
For a commutative local ring $A$, with maximal ideal $\fm$ and residue field $k=A/\fm$, the number $\edim A -\dim A$ is called the \emph{codimension} of $A$, and denoted $\codim A$; here $\edim A$ is the \emph{embedding dimension} of $A$, that is to say, the $k$-vector space dimension of $\fm/\fm^{2}$.

The result below holds also without the hypothesis that $A$ is complete; see \cite{AI08}.  For the definition of a complete intersection ring, see \cite[\S2.3]{BH}.

\begin{corollary}
Let $A$ be a commutative local ring, complete with respect to the topology induced by its maximal ideal.
If $A$ is complete intersection, then
\[
\dim \sfD^{b}(A) \geq \dim \std(A) \geq \codim A - 1\,.
\]
\end{corollary}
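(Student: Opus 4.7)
The plan is to apply Theorem~\ref{thm:goren} with $X=k$, the residue field of $A$. Since every complete intersection ring is Gorenstein, the Gorenstein hypothesis of that theorem is satisfied, and the left inequality $\dim\sfD^{b}(A)\geq\dim\std(A)$ is then immediate. What remains is to exhibit a graded-commutative ring acting centrally on $\sfD^{b}(A)$ relative to which $\Ext^{*}_A(k,Y)$ lies in $\flnoeth$ for every $Y\in\sfD^{b}(A)$, and then to evaluate $\cx_A(k,k)$.

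For the central action, completeness of $A$ lets me invoke Cohen's structure theorem to write $A=Q/(\boldsymbol f)$, with $Q$ a complete regular local ring and $\boldsymbol f=f_1,\dots,f_c$ a $Q$-regular sequence of length $c=\codim A$. Gulliksen's cohomology operators (equivalently, the Eisenbud operators built from a factorization through a Koszul complex of multiplication by each $f_i$) produce degree-$2$ central elements $\chi_1,\dots,\chi_c$ and hence a homomorphism $R=A[\chi_1,\dots,\chi_c]\to\cent^{*}(\sfD^{b}(A))$. Gulliksen's finite generation theorem says that $\Ext^{*}_A(M,N)$ is finitely generated over $R$ for any finitely generated $A$-modules $M,N$; a short dévissage on the cohomology of $Y$ extends this to arbitrary $Y\in\sfD^{b}(A)$. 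Each $\Ext^{n}_A(k,Y)$ is of finite length as an $A$-module, since $k$ has finite length and $Y$ is bounded with finitely generated cohomology, so $\Ext^{*}_A(k,Y)\in\flnoeth(R)$.

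Theorem~\ref{thm:goren} now yields $\dim\std(A)\geq \cx_A(k,k)-1$, so it suffices to verify $\cx_A(k,k)=\codim A$. This is a classical calculation: for a codimension-$c$ complete intersection, Tate's explicit construction of a minimal free resolution of $k$ (or equivalently the Poincaré series identity $P^{A}_k(t)=(1+t)^{\edim A}/(1-t^{2})^{c}$) shows that the Betti numbers $b_{n}=\dim_k\Ext^{n}_A(k,k)$ grow polynomially of exact degree $c-1$, giving $\cx_A(k,k)=c=\codim A$.

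The only nontrivial ingredient is the machinery of cohomology operators: one needs both the degree-$2$ central action on $\sfD^{b}(A)$ and Gulliksen's finite generation statement, both of which are special features of the complete intersection setting. Once these are in hand, the desired inequality drops out of Theorem~\ref{thm:goren} together with the standard complexity computation for $k$.
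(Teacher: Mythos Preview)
Your proof is correct and follows essentially the same route as the paper: invoke the polynomial ring $A[\chi_{1},\dots,\chi_{c}]$ of cohomology operators acting centrally on $\sfD^{b}(A)$, use Gulliksen-type finite generation to place $\Ext^{*}_{A}(k,-)$ in $\flnoeth$, observe that complete intersections are Gorenstein so Theorem~\ref{thm:goren} applies with $X=k$, and finish with Tate's computation $\cx_{A}(k,k)=\codim A$. You supply a bit more detail than the paper (Cohen's structure theorem, the d\'evissage from modules to complexes, the finite-length check on $\Ext^{n}_{A}(k,Y)$), but the argument is the same.
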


\begin{proof} Set $c=\codim A$. The hypotheses on $A$ imply that there is a polynomial ring $A[\chi_{1},\dots,\chi_{c}]$, where the $\chi_{i}$ are indeterminates of degree $2$, acting centrally on $\sfD^{b}(A)$ with
the property that, for any pair of complexes $X,Y$ in $\sfD^{b}(A)$, the graded $A$-module $\Ext^{*}_{A}(X,Y)$ is finitely generated over $A[\chi_{1},\dots,\chi_{c}]$; see, for instance, \cite[\S7.1]{AI}. Since $A$ is complete intersection, it is Gorenstein; see \cite[Proposition~3.1.20]{BH}. Hence Theorem~\ref{thm:goren} applies and, for the residue field $k$ of $A$, yields inequalities
\[
\dim \sfD^{b}(A) \geq \dim \std(A) \geq \cx_{A}(k,k) - 1\,.
\]
It remains to note that $\cx_{A}(k,k)=\codim A$, by a result of Tate \cite[Theorem~6]{Ta}.
\end{proof}

We now apply the preceding results to obtain bounds on the representation dimension of an Artin algebra.

\subsection*{Representation dimension}
Let $A$ be an Artin algebra. The \emph{representation dimension} of $A$ is defined as 
\[
\repdim A = 
\inf \left\{\gldim\End_A(M)\left|
\begin{gathered}
\text{$M$ is a generator and a }\\
\text{cogenerator for $\mod A$}
\end{gathered}
\right. \right\}.
\]
Auslander has proved that $A$ is semi-simple if and only if $\repdim A=0$, and that $\repdim A\leq 2$ if and only if $A$ has finite representation type; see \cite{Au}. The connection between this invariant and dimensions for triangulated categories is that, when $A$ is not semi-simple, one has an inequality:
\[
\repdim A\geq \dim \std(A) +2 \,.
\]
This result is contained in \cite[Proposition~3.7]{Rq:ext}. With Theorem~\ref{thm:main}, it yields a lower bound for the representation dimension of Artin algebras:
\begin{theorem}
\label{thm:repdim}
\pushQED{\qed} 
Let $A$ be an Artin algebra that is not semi-simple, and let $\fr$ be the radical of $A$. If $\Ext^*_A(A/\fr,A/\fr)$ is noetherian as a module over some graded-commutative ring acting centrally on $\sfD^b(A)$, then
\[
\repdim A \geq\cx_{A}(A/\fr)+1\,. \qedhere
\]
\end{theorem}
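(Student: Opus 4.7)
The plan is to obtain this by chaining together two inequalities that have already been established, so the proof is essentially a one-line composition. Rouquier's bound \cite[Proposition~3.7]{Rq:ext}, recalled in the paragraph immediately preceding the statement, gives
\[
\repdim A \geq \dim \std(A) + 2
\]
whenever $A$ is non-semi-simple; this is the sole reason for requiring that hypothesis. The remaining hypothesis on $\Ext^*_A(A/\fr,A/\fr)$ matches the hypothesis of Corollary~\ref{cor:fd} verbatim, and that corollary supplies the complementary inequality
\[
\dim \std(A) \geq \cx_{A}(A/\fr) - 1\,.
\]

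Adding the two bounds yields the claim. All of the substantive machinery has been absorbed into Corollary~\ref{cor:fd}: Proposition~\ref{prop:fingengor} promotes the noetherian hypothesis on $\Ext^{*}_{A}(A/\fr,A/\fr)$ to the conclusion that $A$ is Gorenstein, Theorem~\ref{thm:goren} then applies with $X=A/\fr$ to give $\dim \std(A)\geq \cx_{A}(A/\fr,A/\fr)-1$, and the identification $\cx_{A}(A/\fr,A/\fr)=\cx_{A}(A/\fr)$ recorded in the example at the start of this section converts this into the form quoted above. Consequently the proof reduces to the two-line combination and the \texttt{\textbackslash qed} at the end of the statement is appropriate.

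The main obstacle in reaching this bound has therefore already been dispatched earlier in the paper, in establishing Theorem~\ref{thm:goren} (via Theorem~\ref{thm:main} and hence ultimately Theorem~\ref{thm:koszul} and the Ghost Lemma); nothing new is required at this stage beyond citing Rouquier's inequality and Corollary~\ref{cor:fd}.
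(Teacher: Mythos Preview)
Your proof is correct and matches the paper's intended argument exactly: the theorem is stated with an embedded \qed because it follows immediately from combining Rouquier's inequality $\repdim A \geq \dim \std(A)+2$ with the bound $\dim \std(A)\geq \cx_A(A/\fr)-1$ already obtained in Corollary~\ref{cor:fd}. Your unpacking of how Corollary~\ref{cor:fd} absorbs the Gorenstein step (Proposition~\ref{prop:fingengor}) and Theorem~\ref{thm:goren} is accurate and reflects precisely the logical chain the paper has set up.
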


With a further hypothesis that $A$ is self-injective this result was proved in \cite[Theorem~3.2]{Pb}. Arguing as \cite[Corollary~3.5]{Pb}, which again required that $A$ be self-injective, one obtains the following result relating the representation dimension of an algebra to the Krull dimension of its Hochschild cohomology ring. The hypothesis on $A/\fr\otimes_{k}A/\fr$ holds, for example, if $k$ is algebraically closed; see \cite[XVII, 6.4]{La}.

\begin{corollary}
\label{cor:Hoch}
Let  $k$ be a field, and $A$ a finite dimensional, non semi-simple, $k$-algebra with radical $\fr$, 
with $A/\fr\otimes_{k}A/\fr$ semi-simple. If $\Ext^{*}_A(A/\fr,A/\fr)$ is noetherian as a module over the Hochschild cohomology algebra $\HH^{*}(A)$ of $A$ over $k$, then
\[
\repdim A\geq \dim \HH^{*}(A)+1\,.
\]
\end{corollary}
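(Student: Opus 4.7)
The plan is to apply Theorem~\ref{thm:repdim} and then translate the complexity $\cx_{A}(A/\fr)$ into the Krull dimension $\dim \HH^{*}(A)$, invoking the hypothesis that $A/\fr\otimes_{k}A/\fr$ is semi-simple only at the very last step. Since $\HH^{*}(A)$ acts centrally on $\sfD^{b}(A)$, the noetherian hypothesis on $\Ext^{*}_{A}(A/\fr,A/\fr)$ places one in the setting of Theorem~\ref{thm:repdim}, which immediately gives $\repdim A\geq \cx_{A}(A/\fr)+1$. Thus it suffices to prove the numerical inequality $\cx_{A}(A/\fr)\geq \dim \HH^{*}(A)$.

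Setting $E=\Ext^{*}_{A}(A/\fr,A/\fr)$, the example preceding Theorem~\ref{thm:goren} identifies $\cx_{A}(A/\fr)$ with $\cx_{A}(A/\fr,A/\fr)$. By the first lemma of the Artin algebras subsection, $E$ belongs to $\flnoeth(\HH^{*}(A))$, so Lemma~\ref{lem:reven} (used to pass to $\HH^{*}(A)^{\even}$) together with Proposition~\ref{prop:flevn}(2) yields
\[
\cx_{A}(A/\fr,A/\fr)\;=\;\dim_{\HH^{*}(A)}E\;=\;\dim\bigl(\HH^{*}(A)/I\bigr),
\]
where $I=\ann_{\HH^{*}(A)}(E)$.

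The main obstacle is the final step: showing $\dim(\HH^{*}(A)/I)=\dim \HH^{*}(A)$. Since Krull dimension is unchanged modulo a nilpotent ideal, it is enough to prove that $I$ lies in the nilradical of $\HH^{*}(A)$. The $\HH^{*}(A)$-action on $E$ is given by a ring homomorphism $\HH^{*}(A)\to E$, and because $E$ is unital this kernel coincides with $I$. The hypothesis that $A/\fr\otimes_{k}A/\fr$ is semi-simple makes $A/\fr$ projective as an $(A/\fr)$-bimodule, and a standard argument from the theory of support varieties (cf.\ Snashall--Solberg, Linckelmann, and the proof of \cite[Corollary~3.5]{Pb}) then forces the kernel of $\HH^{*}(A)\to E$ to consist of nilpotent elements. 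This last implication is the technical heart of the argument, and the only place the semi-simplicity hypothesis enters. Chaining the three observations gives $\repdim A\geq \dim\HH^{*}(A)+1$.
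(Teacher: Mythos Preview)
Your proposal is correct and follows essentially the same route as the paper: apply Theorem~\ref{thm:repdim} to obtain $\repdim A\geq\cx_A(A/\fr)+1$, identify $\cx_A(A/\fr)$ with $\dim_{\HH^*(A)}\Ext^*_A(A/\fr,A/\fr)$ via Proposition~\ref{prop:flevn}(2), and then use the nilpotence of the kernel of $\HH^*(A)\to\Ext^*_A(A/\fr,A/\fr)$ (from \cite[Proposition~4.4]{SS}, under the semi-simplicity hypothesis) to conclude that this dimension equals $\dim\HH^*(A)$. The paper phrases the last step as the equality $\spec R=\supp_R\Ext^*_A(A/\fr,A/\fr)$, while you phrase it as $\dim(\HH^*(A)/I)=\dim\HH^*(A)$ with $I$ the annihilator; these are equivalent formulations of the same fact.
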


\begin{proof}
Set $R=\HH^{*}(A)$. Given Theorem~\ref{thm:repdim} and Proposition~\ref{prop:flevn}(2), one has only to prove that
\[
\spec R = \supp_{R}\Ext^{*}_{A}(A/\fr,A/\fr) \,.
\]
This holds because the semi-simplicity of $A/\fr\otimes_{k}A/\fr$ implies that the kernel of the natural map $R\to \Ext^{*}_{A}(A/\fr,A/\fr)$ is nilpotent; see \cite[Proposition~4.4]{SS}.
\end{proof}

The inequality in the preceding result need not hold if $A/\fr\otimes_kA/\fr$ is not semi-simple, for then the kernel of the homomorphism from Hochschild cohomology to the graded center of the derived category need not be nilpotent.  This is illustrated by the following example.

\begin{example}
Let $k$ be a field of characteristic $p>0$. Assume that $k$ is not perfect, so that there is an element $a\in k$ that has no $p$th root in $k$. Let $A=k[a^{1/p}]$, the extension field obtained by adjoining the $p$th root of $a$. Since $A$ is a field, one has $\repdim A=0$ and $\dim\sfD^{b}(A)=0$.

On the other hand, it is easy to check that the Hochschild cohomology algebra of $A$ over $k$ is the polynomial algebra $A[x]$, where $|x|=1$, when $p=2$, and the graded-commutative polynomial algebra $A[x,y]$, where $|x|=1$ and $|y|=2$, when $p$ is odd.  Thus, in either case, $\dim \HH^{*}(A)=1$ holds.

The graded center of the derived category of $A$ is readily computed:
\[
\cent^{n}(\sfD^{b}(A)) =
\begin{cases}
k & \text{when $n=0$};\\
0 & \text{otherwise}\,.
\end{cases}
\]
Thus the kernel of the homomorphism $\HH^{*}(A)\to \cent^*(\sfD^{b}(A))$ is not nilpotent; compare this example with \cite[Proposition~4.4]{SS}.
\end{example}

\bibliographystyle{amsplain}
 
\end{document}